\documentclass[10pt]{article}
\usepackage{amssymb,amsfonts}

\usepackage[dvips]{graphicx}
\usepackage{psfrag}
\usepackage{ifthen}
\usepackage[usenames]{color}
\usepackage{caption}
\usepackage{pinlabel}
\usepackage{relsize}

\usepackage{amssymb}
\usepackage{latexsym}
\usepackage{amsmath}
\usepackage{mathrsfs}
\usepackage[all]{xy}
\usepackage{enumerate}
\usepackage{amsthm}
\usepackage{amsmath,amsthm,amsfonts,amssymb}
\usepackage[usenames]{color}

\newtheorem{theorem}{Theorem}
\newtheorem{definition}{Definition}
\newtheorem{example}{Example}
\newtheorem{lemma}{Lemma}
\newtheorem{cor}{Corollary}
\newtheorem{remark}{Remark}

\newtheorem{prop}{Proposition}

\newcommand{\be}{\begin{enumerate}}
\newcommand{\ee}{\end{enumerate}}
\newcommand{\beq}{\begin{equation}}
\newcommand{\eeq}{\end{equation}}
\usepackage{amssymb}

\usepackage{amsmath}
\usepackage{amssymb}

\newcommand{\gst}{\, | \,}  
\newcommand{\naturals}{\ensuremath{\mathbb{N}}} 
\newcommand{\GammaPresentation}{\langle A\,|\,\mathcal{R}\rangle} 
\newcommand{\GPresentation}{\ensuremath{\langle Z\gst S\rangle}}


\newcommand{\mo}{{-1}}
\newcommand{\bel}[1]{\begin{equation}\label{#1}}
\newcommand{\eee}{\end{equation}}

\newcommand{\LBA}{\left\{\begin{array}}
\newcommand{\EAR}{\end{array}\right.}
\input epsf
\newcommand{\prm}[1]{#1^{\prime}}
\newcommand{\Complement}[1]{#1^{\mathsf{c}}}

\newenvironment{romanenumerate}
	{\begin{enumerate}

	}
        {
        
        \end{enumerate}
        } 

\newcommand{\braced}[4]{\left\{\begin{array}{ll} {#1} & {#2} \\ {#3} & {#4} \end{array}\right.}
\newcommand{\bracedTwo}{\braced}

\newcommand{\comment}[1]{} 


\begin{document}
\title{Quadratic Equations in Hyperbolic Groups are NP-complete}
\author{Olga Kharlampovich, Atefeh Mohajeri, Alexander Taam, Alina Vdovina}

\maketitle
\begin{abstract}
We prove that in a torsion-free hyperbolic  group $\Gamma$,
the length of the value of each variable in a minimal  solution of  a  quadratic equation $Q=1$ is bounded by $N|Q|^3$ for an orientable equation, and by $N|Q|^{4}$  for a non-orientable equation, where $|Q|$ is the length of the equation, and the constant $N$ can be computed.  We show that  the problem, whether a quadratic equation in $\Gamma$ has a solution, is in NP, and that there is a PSpace algorithm for solving arbitrary equations in $\Gamma$. If additionally $\Gamma$ is non-cyclic, then this problem (of deciding existence of a solution) is NP-complete. We also give a slightly larger bound for minimal solutions of quadratic equations in a toral relatively hyperbolic group.  \end{abstract}

\section{Introduction}
The study of quadratic equations (an equation is quadratic if each variable appears exactly twice) over free groups began  with the
work of Malcev \cite{Malcev-1962}.  One of the reasons the research in this topic has been so
fruitful is a deep connection between quadratic equations and the
topology of surfaces (see, for example \cite{Grigorchuk-Kurchanov-1989}).

In \cite{Comerford-Edmunds-1981} the problem of deciding if a quadratic
equation over a free group is satisfiable
was shown to be decidable. In
addition it was shown in \cite{Olshanskii-1989},
\cite{Grigorchuk-Kurchanov-1989}, and \cite{Grigorchuk-Lysenok-1992}
that if $n$, the number of variables, is fixed, then deciding if a standard
quadratic equation has a solution can be done in time which is polynomial in
the sum of the lengths of the coefficients.  (In \cite{Grigorchuk-Lysenok-1992}
 this was shown for hyperbolic groups.) In
\cite{KLMT} it was shown that  the problem whether a quadratic equation in a free group has a solution, is NP-complete. In the present paper we will show that similar problem is NP-complete in a torsion-free hyperbolic group and obtain polynomial bounds on minimal solutions of quadratic equations in such a  group. There are still very few problems in topology and geometry which are known to be NP-complete,  one of them is  3-manifold knot genus \cite{AHT}. 
It was proved in \cite{kv} that in a free group,
the length of the value of each variable in a minimal solution of a standard  quadratic equation is bounded by $2s$ for an orientable equation and by $12s^4$ for a non-orientable equation, where $s$ is the sum of the lengths of the coefficients.
Similar result was proved in \cite{lm} for arbitrary quadratic equations.

If a group $G$ has a generating set $A$, we will consider a solution of a system of equations $S(X,A)=1$ in $G$ as a $G$-homomorphism $\phi : (F(X)*G)/ncl (S)\rightarrow G$.  The length $|S|$ of the system $S(X,A)=1$ is the sum of the lengths of equations in $S(X,A)=1$ considered as elements of  the free group $F(A,X)$.
We will prove the following results.

\begin{theorem} \label{th:1} Let $\Gamma$ be a torsion-free hyperbolic group given by a generating set $A$ and a finite number of relations. It is possible to compute a constant $N$ with the property that if a quadratic equation $Q (X,A) = 1$ is solvable in  $\Gamma$,
 then there exists a solution $\phi$ such that for any variable $ x$,
$|\phi (x)|\leq N|Q|^3$ if $Q$ is orientable,
$|\phi (x)|\leq N|Q|^{4},$ if $Q$ is non-orientable.

\end{theorem}

A group $G$ that is hyperbolic relative to a collection $\{H_{1},\ldots,H_{k}\}$ of subgroups (see Section 4 for a definition) is called \emph{toral} if $H_{1},\ldots,H_{k}$ are all abelian and $G$ is
torsion-free.

\begin{theorem} \label{th:2}  Let $\Gamma$ be a  toral relatively  hyperbolic group with generating set $A$. It is possible to compute a constant $N$ with the property that if a quadratic equation $Q (X,A) = 1$ is solvable in  $\Gamma$,
 then there exists a solution $\phi$ such that for any variable $ x$,
$|\phi (x)|\leq N|Q|^5 $ if $Q$ is orientable,
$|\phi (x)|\leq N|Q|^{12},$ if $Q$ is non-orientable.

\end{theorem}

\begin{theorem} \label{NPhard}  Let $\Gamma$ be a non-cyclic torsion-free hyperbolic group. The problem, whether a quadratic equation has a solution in $\Gamma$, is NP-complete.\end{theorem}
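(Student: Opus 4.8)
Two things need to be shown: (1) the problem lies in NP, and (2) it is NP-hard. For membership in NP, the plan is to use Theorem \ref{th:1} as a polynomial certificate bound. Given a quadratic equation $Q(X,A)=1$ over $\Gamma$ with $\Gamma = \langle A \mid \mathcal R\rangle$ fixed, Theorem \ref{th:1} tells us that if $Q=1$ is solvable then there is a solution $\phi$ with $|\phi(x)| \le N|Q|^{16}$ for each variable $x$ (taking the worst, non-orientable case). Hence a nondeterministic algorithm guesses, for each of the at most $|Q|$ variables, a word in $F(A)$ of length at most $N|Q|^{16}$; the total guess has length polynomial in $|Q|$. It then substitutes these words into $Q$, obtaining a word $w$ of length $O(|Q| \cdot N|Q|^{16}) = O(|Q|^{17})$ in $F(A)$, and must verify $w =_\Gamma 1$. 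Since $\Gamma$ is hyperbolic, the word problem is solvable by Dehn's algorithm in time linear (or at worst quadratic) in $|w|$, hence in polynomial time. This verification step is deterministic polynomial time, so the whole procedure is an NP algorithm. I should be slightly careful that the constant $N$ and the Dehn presentation are computed once and for all from $\Gamma$ (which is fixed, not part of the input), so they contribute only to the hidden constants.

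For NP-hardness, the plan is to reduce from the known NP-hard problem of satisfiability of quadratic equations over a free group, established in \cite{KLMT}. Let $F = F(a,b)$ be a free group of rank $2$. The key algebraic input is that a torsion-free hyperbolic group $\Gamma$ (being infinite and torsion-free) contains a quasiconvex free subgroup of rank $2$; indeed any two independent infinite-order elements have powers generating a free group of rank $2$, and one fixes such a pair $\{u,v\}$ with an explicit embedding $\iota: F(a,b) \hookrightarrow \Gamma$, $a \mapsto u$, $b \mapsto v$. Because this embedding is fixed (depending only on $\Gamma$), it is quasi-isometric onto its image, so there are constants $\lambda, c$ with $\frac{1}{\lambda}|w|_F - c \le |\iota(w)|_\Gamma \le \lambda|w|_F + c$; in particular $\iota(w) =_\Gamma 1$ if and only if $w =_F 1$. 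Given a quadratic equation $Q(X, a, b) = 1$ over $F$, the reduction outputs the quadratic equation $Q(X, u, v) = 1$ over $\Gamma$ obtained by replacing each coefficient occurrence of $a^{\pm 1}, b^{\pm 1}$ by the fixed word $u^{\pm 1}, v^{\pm 1}$; this is computable in linear time and increases the length only by a constant factor, so it is a polynomial-time reduction.

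The correctness of the reduction is where the main work lies, and it is the step I expect to be the principal obstacle. One direction is immediate: a solution of $Q(X,a,b)=1$ in $F$ gives, via $\iota$, a solution of $Q(X,u,v)=1$ in $\Gamma$. The converse — that solvability over $\Gamma$ implies solvability over $F$ — is not automatic, since a priori a $\Gamma$-solution assigns to the variables arbitrary elements of $\Gamma$, not elements of the free subgroup $\langle u, v\rangle$. The standard way around this is the theory of $JSJ$/NTQ or the fact that quadratic equations correspond to maps of surface groups, together with a retraction-type argument: one uses that the set of solutions of a quadratic equation in a torsion-free hyperbolic group is described, up to the action of the mapping class group, by finitely many families, and can be "pushed" into a free subgroup; alternatively, and more cheaply, one invokes that the canonical surface group $\pi_1(S)$ associated to $Q$ maps to $\Gamma$, and since $\pi_1(S)$ is itself (for $Q$ of complexity $\ge 1$) a non-elementary torsion-free hyperbolic group, any such map either factors through a free group or has non-trivial "genus", and in the latter case one can still produce a free-group solution of the abstract equation by a standard surgery on the surface. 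I would present this by choosing the rank-$2$ free subgroup to be a retract of a suitable subgroup or, most simply, by appealing to the fact proved in the free-group setting (\cite{kv}, \cite{lm}) that minimal solutions of $Q=1$ over $F$ and the combinatorial structure of the problem are governed only by the coefficient lengths, so that the $\Gamma$-solution, after replacing the variable values by geodesic words and using hyperbolicity to control cancellation, yields a van Kampen diagram over $F$ witnessing $Q=1$ in $F$. Making this last argument fully rigorous — controlling how a $\Gamma$-solution interacts with the quasiconvex embedding and extracting an honest free-group solution — is the crux; everything else is bookkeeping.
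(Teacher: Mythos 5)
Your NP membership argument is correct and is exactly the paper's: by Theorem \ref{th:1} one guesses values of length at most $N|Q|^{16}$ and verifies them with the polynomial-time word problem in the fixed hyperbolic group $\Gamma$. The gap is in the NP-hardness half. You reduce from satisfiability of quadratic equations over $F(a,b)$ by pushing the coefficients into a quasiconvex free subgroup $\langle u,v\rangle\leq\Gamma$, and you correctly identify that the whole difficulty is the converse direction: a solution of $Q(X,u,v)=1$ in $\Gamma$ assigns to the variables arbitrary elements of $\Gamma$, there is no retraction of $\Gamma$ onto a quasiconvex free subgroup in general, and genus can genuinely drop when passing from a subgroup to an overgroup. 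But you then leave this step as a menu of possible ``standard'' arguments (surface surgery, JSJ/NTQ descriptions, van Kampen diagrams), none of which is carried out and none of which is known to establish the transfer for an \emph{arbitrary} quadratic equation with coefficients in a free subgroup. That equivalence is precisely the statement requiring proof, so the reduction's correctness is unestablished.

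The paper does not attempt the transfer in this generality. It reduces directly from bin packing, using the specific equations $\prod_{j} z_j^{-1}[a^{(i)},b^{ir_j}]z_j=[a^{(i)N},b^{iB}]$ from \cite{KLMT}, with coefficients engineered inside a convex free subgroup $F(b,c,d)$: the word $a^{(i)}$ is built from high powers $b^{sD},c^{sD},d^{sD}$ whose normal closure in $\Gamma$ is free by \cite{[Ch12]}, with rapidly growing exponents $t_i$. The passage from a $\Gamma$-solution to a free-group solution then occupies all of Section 5: canonical representatives (Proposition \ref{Lem:RipsSela1}), the entire-transformation rewriting (Proposition \ref{EnTrans}), a solution-diagram analysis showing that corresponding subwords agree in $F$ up to uniformly bounded error, a Bulitko-type lemma (Lemma \ref{6}) producing an infinite family of indices, and an $a$-band argument (Lemma \ref{removeError}) eliminating the error disks. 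To complete your route you would either have to prove the general transfer statement --- which is not available --- or specialize, as the paper does, to coefficients with enough structure that the transfer can be forced.
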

Recall that all non-trivial virtually cyclic torsion-free hyperbolic groups must actually be infinite cyclic. So ``non-cyclic'' and ``non-elementary'' may be used interchangeably in this context.

\section{Reduction of equations in hyperbolic groups to equations in free groups}
In \cite{RS95}, the problem of deciding whether or not a system of equations $S(Z)=1$ (we will often just write system $S(Z)$ skipping the equality sign) over a torsion-free hyperbolic group $\Gamma$ has a solution was solved by constructing
\emph{canonical representatives} for certain elements of $\Gamma$. This construction reduced the problem to deciding the existence of solutions in finitely many
systems of equations over free groups, which had been previously solved in \cite{Mak}.  The reduction is described
below.

Let $\overline{\phantom{c}}$ denote the canonical epimorphism $F(Z,A)\rightarrow \Gamma_{S}$, where  $\Gamma_{S}$ is the  quotient of $F(Z)\ast\Gamma$ over the
normal closure of the set $S$.

For a homomorphism $\phi: F(Z,A)\rightarrow K$ we define $\overline{\phi}: \Gamma_{S} \rightarrow K$
by
\[
\big(\overline{w}\big)^{\overline{\phi}} =  w^{\phi},
\]
where any preimage $w$ of $\overline{w}$ may be used.  We will always ensure that $\overline{\phi}$ is a well-defined homomorphism (this is equivalent to fact that $\phi$ factors through $\Gamma _S$).
For a system $S(Z)=1$ without coefficients, $\overline{\phantom{c}}$ denotes the canonical epimorphism $F(Z)\rightarrow \langle Z\gst S\rangle$
and $\overline{\phi}$ is defined analogously.

\begin{prop}\label{Lem:RipsSela1}
Let $\Gamma=\GammaPresentation$ be a torsion-free $\delta$-hyperbolic group and $\pi : F(A)\rightarrow \Gamma$ the canonical epimorphism.  There
is an algorithm that, given a system $S(Z,A)=1$
of equations over $\Gamma$, produces finitely many systems of
equations
\begin{equation}
S_{1} (X_{1},A)=1,\ldots,S_{n}(X_{n},A)=1
\end{equation}
over $F=F(A)$ and homomorphisms $\rho_{i}: F(Z,A)\rightarrow F_{S_{i}}$ for $i=1,\ldots,n$
such that
\begin{romanenumerate}
\item [1)]for every $F$-homomorphism $\phi : F_{S_{i}}\rightarrow F$,  the map $\overline{\rho_{i}\phi\pi}:\Gamma_{S}\rightarrow \Gamma$ is a $\Gamma$-homomorphism, and
\item [2)]for every $\Gamma$-homomorphism $\psi: \Gamma_{S}\rightarrow \Gamma$ there is an integer $i$ and an $F$-homomorphism
$\phi : F_{S_{i}}\rightarrow F(A)$ such that $\overline{\rho_{i}\phi\pi}=\psi$.
\end{romanenumerate}
Further, if $S(Z)=1$ is a system without coefficients, the above holds with $G=\GPresentation$ in place of $\Gamma_{S}$ and `homomorphism' in place of
`$\Gamma$-homomorphism'.

Moreover, $|S_i|=O(|S|^2)$ for each $i=1,\ldots, n$, where the constants depend on the group $\Gamma$.
\end{prop}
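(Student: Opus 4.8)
The plan is to follow the Rips--Sela construction of \emph{canonical representatives} for a torsion-free $\delta$-hyperbolic group $\Gamma = \GammaPresentation$. Recall that given a word $w$ representing an element $g \in \Gamma$, one fixes (via the action of $\Gamma$ on its Cayley graph, or better on a suitable locally finite tree-like object, as in Rips--Sela) a canonical quasigeodesic word $\theta(g)$ in $F(A)$ representing $g$, together with, for each factorization $g = g_1 g_2$ of interest, an interpolation: the Rips--Sela lemma produces, for each triangle of elements $g_1, g_2, g_3$ with $g_1 g_2 g_3 = 1$, canonical representatives $c(g_i)$ whose product is trivial \emph{in $F(A)$ up to a cancellation diagram of bounded combinatorial type}. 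The first step is therefore to invoke this machinery: for each equation $\prod w_j = 1$ among the (finitely many) defining relations of $\Gamma_S$ when a putative solution is substituted, the triangulation of the corresponding van Kampen-type diagram has a bounded number of triangles, and to each triangle we attach a ``triangle variable'' recording the three canonical segments meeting at its center, subject to the relation that their product is trivial in $F(A)$.

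Next I would assemble the systems $S_i(X_i,A)=1$. The key point is that once one fixes the \emph{combinatorial type} of the canonical-representative diagram — i.e. which canonical segments are glued to which, and the signs/orientations of the gluings — the constraints become a finite system of equations over the free group $F(A)$ in the new variables $X_i$ (the canonical segments of the values of the original variables $Z$, plus the auxiliary triangle pieces). There are only finitely many such combinatorial types because the number of triangles is linear in $|S|$ and each triangle contributes a bounded number of pieces; enumerating them gives the finite list $S_1, \ldots, S_n$. The homomorphism $\rho_i : F(Z,A) \to F_{S_i}$ sends each generator $z$ to the concatenation of its canonical segments as dictated by type $i$. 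Property (i) — that $\overline{\rho_i \phi \pi}$ is a well-defined $\Gamma$-homomorphism — follows because the equations $S_i$ were precisely designed to force the images of the relators of $\Gamma_S$ to die in $\Gamma$; property (ii) — that every genuine $\Gamma$-homomorphism $\psi$ arises this way — is exactly the content of the Rips--Sela canonical representatives theorem: any solution in $\Gamma$ can be lifted by reading off its canonical representatives, which realizes some type $i$ and some $F$-homomorphism $\phi$. The coefficient-free case is identical with $\Gamma_S$ replaced by $G = \GPresentation$ and no $\pi$ to track.

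The main obstacle, and the part deserving the most care, is the length bound $|S_i| = O(|S|^4)$. This is where one must quantify the Rips--Sela construction rather than merely cite its existence. The sources of blow-up are: (a) each canonical representative of a single element has length controlled by a polynomial (in fact linear, up to the hyperbolicity constants) in the length of the input word, but (b) the triangulation of a relator polygon of perimeter $p$ uses $O(p)$ triangles, and (c) the auxiliary variables recording intermediate vertices, when fully expanded, stack up: a variable's value in $\Gamma$ is a product of $O(|S|)$ canonical segments, each written over $O(|S|)$ triangles' worth of data, and the equations relating consecutive segments contribute another factor from the bounded-size cancellation diagrams, whose ``bounded'' size is in terms of $\delta$ but whose number is proportional to the lengths involved. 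Carefully bookkeeping these — segment length $O(|S|)$, number of segments per variable $O(|S|)$, and the fact that the defining relations $S$ are themselves of length $O(|S|)$, with one more factor of $|S|$ from the total number of relations/pieces — yields the stated fourth power. I would present this as a lemma isolating the Rips--Sela length estimate and then a short arithmetic argument multiplying the four linear contributions; the delicate point is making sure no logarithmic or higher-degree factors sneak in from the way canonical representatives of \emph{products} (as opposed to generators) are defined, which is handled by the additivity (up to bounded error) built into the construction.
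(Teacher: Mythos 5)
Your proposal follows the same overall route as the paper: invoke the Rips--Sela canonical representatives, observe that a triangle $g_1g_2g_3 = 1$ forces a factorization $\theta_m(g_k) = h_k c_k h_{k+1}^{-1}$ with the ``middle'' words $c_k$ of length bounded by a constant $L = L(\delta,|A|,|S|)$ and $c_1c_2c_3 = 1$ in $\Gamma$, enumerate the finitely many choices of $m$ and of the $c_k$'s (your ``combinatorial types''), and obtain one system $S_i$ over $F(A)$ for each choice, with $\rho_i(z_s) = x_k^{(j)} c_k^{(j)} (x_{k+1}^{(j)})^{-1}$. Your verification of (i) and (ii) is also essentially the paper's: (i) because $\rho_i\phi\pi$ sends each triangle relator to a conjugate of $c_1c_2c_3$, which is trivial in $\Gamma$, and (ii) because reading off the canonical representatives of an actual solution $\psi$ lands you in one of the enumerated types. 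So the main skeleton is correct and matches the paper.

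Where you are vague, and where the paper is careful, is the length bound $|S_i| = O(|S|^4)$. First, the triangulation you allude to is not a triangulation of a van Kampen diagram; it is an \emph{algebraic} preprocessing of the system $S$ itself into a system $S'$ consisting only of triangular equations $x^{-1}yz = 1$ and constant equations $z_s = a_s$, done \emph{before} canonical representatives enter. This step is essential because the Rips--Sela construction is stated for such triangles, and it contributes the first quadratic factor: each of the $O(|S|)$ new triangular equations may contain a constant of length $O(|S|)$, so $|S'| = O(|S|^2)$ (Lemma~\ref{interLem1}). Second, passing from $S'$ to $S_i$ contributes the other quadratic factor: $S_i$ has $O(|S'|)$ equations, each of length either $O(1)$ (the equations (\ref{Eqn:SC1}), whose only non-variable content is the bounded-length $c_k^{(j)}$'s) or $O(|S'|)$ (the equations (\ref{Eqn:SC2}), whose right-hand side $\theta_m(a_s)$ is a quasi-geodesic representative of length $O(|a_s|) = O(|S'|)$). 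This gives $|S_i| = O(|S'|^2) = O(|S|^4)$ (Lemma~\ref{interLem2}). Your ``four linear factors'' — segment length, number of segments per variable, length of defining relations, number of pieces — do not cleanly line up with these two squarings; in particular ``number of segments per variable $O(|S|)$'' is not a quantity that appears in the construction (each $z_s$ maps to exactly three pieces $x_k^{(j)}, c_k^{(j)}, x_{k+1}^{(j)}$, not $O(|S|)$ of them), and your accounting drifts between the length of solution \emph{values} and the length of the \emph{equations} $S_i$, which is the quantity actually being bounded. You arrive at the same exponent, but by a count that would not survive scrutiny; the two-step decomposition (triangulate, then convert) is the clean way to land the bound and is what should replace your sketch.
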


\begin{proof}
The result is an easy corollary of Theorem~4.5 of \cite{RS95}, but we will provide a few details.

Assume that the system $S(Z,A)$, in variables $z_{1},\ldots,z_{l}$, consists of $r$ constant equations and $q-r$ triangular equations, i.e.
\[
S(Z,A) = \braced{z_{\sigma(j,1)}z_{\sigma(j,2)}z_{\sigma(j,3)}=1}{j=1,\ldots,q-r}{z_{s}  =  a_{s}}{s=l-r+1,\ldots,l}
\]
where $\sigma(j,k)\in\{1,\ldots,l\}$ and $a_{i}\in\Gamma$.  An algorithm is described in \cite{RS95}
which, for every $m\in\naturals$, assigns to each element $g\in \Gamma$ a word $\theta_{m}(g)\in F$ satisfying
\[
\theta_{m}(g)=g \mbox{ in } \Gamma
\]
called its \emph{canonical representative}.  The representatives $\theta_{m}(g)$   satisfy
useful properties for certain $m$ and certain finite subsets of $\Gamma$ \cite{RS95}, as follows.

Let\footnote{The constant of hyperbolicity $\delta$ may be computed from a presentation of $\Gamma$ using the results of \cite{EH01}.} $L=q\cdot 2^{5050(\delta+1)^{6}(2|A|)^{2\delta}}$.
Suppose $\psi: F(Z,A)\rightarrow \Gamma$ is a solution of $S(Z,A)$ and denote
\[
\psi(z_{\sigma(j,k)})=g_{\sigma(j,k)}.
\]
Then there exist
$h_{k}^{(j)}, c_{k}^{(j)}\in F(A)$ (for $j=1,\ldots,q-r$ and $k=1,2,3$) such that
\begin{romanenumerate}
\item each $c_{k}^{(j)}$ has length less than\footnote{The bound of $L$ here, and below, is from \cite{RS95}.} $L$ (as a word in $F$),  \label{RepsCond1}
\item $c_{1}^{(j)}c_{2}^{(j)}c_{3}^{(j)}  =  1$ in $\Gamma$, \label{RepsCond2}
\item there exists $m\leq L$\ such that the canonical representatives satisfy the following equations in $F$:\label{RepsCond3}
\begin{eqnarray}
\theta_{m} (g_{\sigma(j,1)}) & = & h_{1}^{(j)} c_{1}^{(j)} \left(h_{2}^{(j)}\right)^{-1} \label{CanonReps1}\\
\theta_{m} (g_{\sigma(j,2)}) & = & h_{2}^{(j)} c_{2}^{(j)} \left(h_{3}^{(j)}\right)^{-1}\\
\theta_{m} (g_{\sigma(j,3)}) & = & h_{3}^{(j)} c_{3}^{(j)} \left(h_{1}^{(j)}\right)^{-1}.\label{CanonReps3}
\end{eqnarray}
\end{romanenumerate}
In particular, when $\sigma(j,k)=\sigma(j',k')$ (which corresponds to two occurrences in $S$ of the variable $z_{\sigma(j,k)}$) we have
\begin{equation}
h_{k}^{(j)} c_{k}^{(j)} \left(h_{k+1}^{(j)}\right)^{-1} = h_{k'}^{(j')} c_{k'}^{(j')} \left(h_{k'+1}^{(j')}\right)^{-1}.\label{Hequality}
\end{equation}

Consequently, we construct the systems $S_i(X_{i},A)$ as follows.
For every positive integer $m\leq L$ and every choice of $3(q-r)$ elements $c_{1}^{(j)},c_{2}^{(j)},c_{3}^{(j)}\in F$  ($j=1,\ldots,q-r$)
satisfying (i) and (ii)\footnote{The word problem in hyperbolic groups is decidable.}
we build a system $S_i(X_{i},A)$
consisting of the equations
\begin{eqnarray}
x_{k}^{(j)}c_{k}^{(j)}\left(x_{k+1}^{(j)}\right)^{-1} & = & x_{k'}^{(j')}c_{k'}^{(j')}\left(x_{k'+1}^{(j')}\right)^{-1} \label{Eqn:SC1}\\
x_{k}^{(j)}c_{k}^{(j)}\left(x_{k+1}^{(j)}\right)^{-1} & = & \theta_{m}(a_s) \label{Eqn:SC2}
\end{eqnarray}
where an equation of type   (\ref{Eqn:SC1}) is included whenever  $\sigma(j,k)=\sigma(j',k')$ and an equation of type (\ref{Eqn:SC2}) is included whenever
$\sigma(j,k)=s\in\{l-r+1,\ldots,l\}$.  To define $\rho_{i}$, set
\[
\rho_i (z_s) = \bracedTwo{x_{k}^{(j)}c_{k}^{(j)}\left(x_{k+1}^{(j)}\right)^{-1},}{1\leq s \leq l-r \mbox{ and } s=\sigma(j,k)}{ \theta_{m}(a_s),}{l-r+1\leq s \leq l}
\]
where for $1\leq s \leq l-r$ any $j,k$ with $\sigma(j,k)=s$ may be used.

If $\psi:F(Z)\rightarrow \Gamma$ is any solution to $S(Z,A)=1$, there is a system $S(X_{i},A)$ such that $\theta_{m}(g_{\sigma(j,k)})$ satisfy
(\ref{RepsCond1})-(\ref{RepsCond3}).  Then the required solution $\phi$ is given by
\[
\phi\big(x_{j}^{(k)}\big) = h_{j}^{(k)}.
\]
Indeed, (iii) implies that $\phi$ is a solution to $S(X_{i},A)=1$.  For $s=\sigma(j,k)\in\{1,\ldots,l-r\}$,
\[
z_{s}^{\rho_{i}\phi} = h_{k}^{(j)} c_{k}^{(j)} \left(h_{k+1}^{(j)}\right)^{-1} = \theta_{m}(g_{\sigma(j,k)})
\]
and similarly for $s\in\{l-r+1,\ldots,l\}$, hence $\psi= \rho_{i}\phi\pi$.

Conversely, for any solution $\phi\big(x_{j}^{(k)}\big)= h_{j}^{(k)}$ of $S(X_{i})=1$ one sees that by (\ref{Eqn:SC1}),
\[
z_{\sigma(j,1)}z_{\sigma(j,2)}z_{\sigma(j,3)} \rightarrow^{\rho_{i}\phi} h_{1}^{(j)} c_{1}^{(j)}c_{2}^{(j)}c_{3}^{(j)} \big(h_{1}^{(j)}\big)^{-1}
\]
which maps to 1 under $\pi$ by (ii), hence $\rho_{i}\phi\pi$ induces a homomorphism.

The statement about the length of the systems $S_i=1$ will follow from the next proposition.
\end{proof}

\begin{prop} \label{Bnd}
Let $S=S(Z,A)=1$ be a system of equations over $\Gamma=\GammaPresentation$. Then, for the systems $S_i=S_i(X_i,A)$ defined by equations  (\ref{Eqn:SC1}) and (\ref{Eqn:SC2}) we have $|S_i|=O(|S|^2)$ and $|X_i|=O(|S|)$.
If $S(Z,A)$ is a quadratic system of equations, then the systems $S_i=S_i(X_i,A)$  are quadratic. \end{prop}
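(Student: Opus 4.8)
The plan is to read both assertions off a direct count of the equations (\ref{Eqn:SC1}) and (\ref{Eqn:SC2}), once the input has been put in the triangular-and-constant normal form used in the proof of Proposition~\ref{Lem:RipsSela1}. The first step is to record that this normalization costs only a linear blow-up: an arbitrary system of total length $|S|$ is rewritten, by introducing $O(|S|)$ auxiliary variables, as $O(|S|)$ triangular equations $zzz=1$ together with $O(|S|)$ constant equations $z_s=a_s$ in which $|a_s|\le|S|$. In particular the number $q$ of equations and the number $l$ of variables are $O(|S|)$, whence $L=q\cdot 2^{5050(\delta+1)^6(2|A|)^{2\delta}}=O(|S|)$, the implied constant depending only on $\Gamma$.

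For the length bound I would then estimate separately the number of equations of $S_i$ and the length of each one. There is one equation of type (\ref{Eqn:SC1}) for each pair of occurrences of a variable and one of type (\ref{Eqn:SC2}) for each occurrence of a variable $z_s$ with $s>l-m$; since the normalized system has $O(|S|)$ occurrences of variables in all, $S_i$ has at most $O(|S|^2)$ equations. Each equation of $S_i$ is a product of at most two blocks $x_k^{(j)}c_k^{(j)}(x_{k+1}^{(j)})^{-1}$, possibly with one further factor $\theta_m(a_s)$; each $c_k^{(j)}$ has length less than $L=O(|S|)$, and by the explicit length estimate for canonical representatives in \cite{RS95} the length of $\theta_m(a_s)$ is bounded by a polynomial in $m$ and $|a_s|$, hence by $O(|S|^2)$ since $m\le L=O(|S|)$ and $|a_s|\le|S|$. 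So each equation of $S_i$ has length $O(|S|^2)$, and multiplying by the number of equations gives $|S_i|=O(|S|^4)$. (When $S$ is quadratic there is only one equation per variable and the true bound is far smaller, but the crude estimate already gives what is claimed.) The one ingredient here that is not pure bookkeeping is the length bound for canonical representatives quoted from \cite{RS95}: that is where the exponent is actually controlled, and pinning it down is the step I expect to need the most care.

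For the quadratic claim I would first observe that the normalization can be carried out so as to preserve the property that every variable occurs exactly twice: each occurrence of an original variable becomes a single entry of a single triangular equation, and each auxiliary variable introduced by the reduction occurs exactly twice among the resulting equations --- either in two consecutive triangular equations, or in one triangular equation together with its own constant equation $z_s=a_s$. So assume $S$ is quadratic and triangular, and picture the triangular equation $j$ as a triangle whose edges are labelled $z_{\sigma(j,1)},z_{\sigma(j,2)},z_{\sigma(j,3)}$ and whose corners are labelled $x_1^{(j)},x_2^{(j)},x_3^{(j)}$, with edge $k$ contributing the block $x_k^{(j)}c_k^{(j)}(x_{k+1}^{(j)})^{-1}$ running from the corner $x_k^{(j)}$ to the corner $x_{k+1}^{(j)}$. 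Since $S$ is quadratic, each variable $z_s$ occurs exactly twice, so its block occurs exactly once in $S_i$: it occupies one side of the equation (\ref{Eqn:SC1}) attached to $z_s$ when both occurrences lie in triangular equations, and the left side of the corresponding equation (\ref{Eqn:SC2}) when one occurrence is the constant equation $z_s=a_s$. Hence every edge block occurs exactly once in $S_i$. The variable $x_k^{(j)}$ occurs in exactly two edge blocks and nowhere else in $S_i$, namely positively at the start of the block of edge $k$ of triangle $j$ and negatively at the end of the block of the preceding edge of triangle $j$, so $x_k^{(j)}$ occurs exactly twice in $S_i$, and $S_i$ is quadratic.
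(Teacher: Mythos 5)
Your proof is correct and follows essentially the same route as the paper's: normalize $S$ to triangular-plus-constant form, bound the number and the lengths of the equations (\ref{Eqn:SC1})--(\ref{Eqn:SC2}) using the bound $L$ on the $|c_k^{(j)}|$ and the quasigeodesic bound on $|\theta_m(a_s)|$, and track the two occurrences of each $x_k^{(j)}$ (as the head of one block and the tail of the adjacent block of the same triangle) for the quadratic claim. The only difference is bookkeeping: the paper reaches $O(|S|^4)$ as $O(|S'|^2)$ with a triangulation of size $|S'|=O(|S|^2)$, while you take a linear triangulation and absorb the extra squares into the count of occurrence-pairs and the per-equation length; both ledgers balance to the same exponent.
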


In order to prove the above proposition, we first prove the two following lemmas.

\begin{lemma}\label{interLem1}
Let  $S=S(Z,A)=1$ be a  system of equations over $\Gamma=\GammaPresentation$. We can rewrite $S$ as a system of triangular equations $S'$ such that $|S'|=O(|S|)$ (constants depend on $\Gamma$).  If $S$ is quadratic, then  $S'$ is also quadratic.
\end{lemma}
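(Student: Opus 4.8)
The plan is to convert an arbitrary system $S(Z,A)=1$ into triangular form by the classical device of introducing one new variable per subword and recording it with a length-three relation, while being careful about how coefficients and repeated variable occurrences are handled so that the quadratic property is preserved. First I would reduce to the case of a single equation $w(Z,A)=1$ where $w=y_1 y_2 \cdots y_r$ is a word of length $r=|w|$ in the letters $y_i \in Z^{\pm 1}\cup A^{\pm 1}$; a system is handled by doing this to each equation in turn, and the total length only adds up, so it suffices to bound the triangulation of one equation by $O(r^2)$ (in fact $O(r)$ for a single equation, with the square only appearing because one must also triangulate the constant equations $z = a$ when $a$ is a long word in $F(A)$ — see below). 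For the word $w=y_1\cdots y_r$, introduce fresh variables $u_2,\dots,u_{r-1}$ and replace $w=1$ by the triangular equations
\[
u_2 = y_1 y_2,\quad u_3 = u_2 y_3,\quad \dots,\quad u_{r-1}=u_{r-2}y_{r-1},\quad u_{r-1}y_r = 1,
\]
each of which, after moving everything to one side, is a word of length $\le 4$; rewriting length-$4$ relations as two length-$3$ relations with one more auxiliary variable is routine and changes lengths by a constant factor. This produces $O(r)$ triangular equations of bounded length, hence total length $O(r)$ for this equation.

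The one subtlety is the coefficients. A letter $a\in A^{\pm 1}$ appearing in $w$ is a constant in the equation, which is fine, but if we insist (as the later construction does) that every equation be genuinely triangular in the sense of the display in Proposition~\ref{Lem:RipsSela1} — three variables, or one variable equal to a single group constant $a_s$ — then each maximal constant syllable of $w$, say a subword equal to $a^k$ or more generally to an element $a\in\Gamma$ of word-length up to $|S|$, must be absorbed into a constant equation $z_{new}=a$, and that constant equation itself has to be triangulated by the same telescoping trick, costing $O(|a|)\le O(|S|)$ more equations. Summing over all coefficient-occurrences, and noting there are at most $|S|$ of them each of cost at most $|S|$, gives the stated $|S'|=O(|S|^2)$. (If one allows constant syllables to sit inside the triangular relations this can be improved, but $O(|S|^2)$ is all that is claimed and all that is needed.)

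For the quadratic claim: if $S$ is quadratic then each original variable $z\in Z$ occurs exactly twice in $S$. In the telescoping above, every occurrence of $z$ is carried verbatim into exactly one of the new triangular relations, so $z$ still occurs exactly twice in $S'$. Each newly introduced auxiliary variable $u_i$ (and each auxiliary used to split a length-$4$ relation into two length-$3$ relations) is designed to occur exactly twice: once on the left-hand side of the relation that defines it and once inside the next relation that consumes it — this is exactly why the telescoping shape $u_{i}=u_{i-1}y_i$ is used rather than, say, expressing $w$ as a product of many short pieces glued at the ends. Hence $S'$ is quadratic. I expect the main obstacle to be purely bookkeeping: one must be careful at the two ends of the word $w$ (the first relation $u_2=y_1y_2$ introduces $u_2$ for the first time, and the last relation $u_{r-1}y_r=1$ consumes $u_{r-1}$ for the second and final time) and careful that converting length-$4$ relations to length-$3$ relations does not create a variable of valence $1$ or $3$; choosing the split points so each auxiliary has valence exactly $2$ is the only place any thought is required, and it is easily arranged.
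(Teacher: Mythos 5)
Your proposal is correct and follows essentially the same telescoping triangulation as the paper: introduce one fresh variable per prefix, note each auxiliary variable occurs exactly twice so quadraticity is preserved, and bound the total length by (number of equations) $\times$ (length of each). The only cosmetic difference is in the bookkeeping of where the square comes from — the paper simply allows each $y_i$ to be a constant of length up to $|S|$, so each of the $O(|S|)$ triangular equations has length $O(|S|)$, whereas you extract constants into separate constant equations; both give $O(|S|^2)$.
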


\begin{proof}
We can assume that $S$ consists of only one equation of the form $y_1 y_2 \cdots y_n=1$, where either $y_i \in Z$ or $y_i \in \Gamma$. The general case can be proved by a similar argument. At the first step of triangulation we introduce the new variable $x_1$ and we rewrite $S$ as

\[
 \begin{array}{c}
y_1y_2 x_1=1\\
x_1^{-1}y_3 \cdots y_n=1
\end{array} \]

If we continue the process we get a triangular system $S'$ of the following form:

\[
 \begin{array}{c}
y_1y_2 x_1=1\\
x_1^{-1}y_3x_2=1\\
x_2^{-1}y_4 x_3=1\\
\vdots \\
x_{n-3}^{-1} y_{n-1}y_n=1
\end{array} \]

The length of each triangular equation is bounded by $3$ and there are $(|S|-2)$ such equations. In addition we have to add the length of the coefficients, because some $y_i$ belong to $\Gamma$. Hence, $|S' | \leq 4|S|$.

If $S$ is quadratic, then there are at most two indices $i,j$ such that $y_i=y_j=z$. Hence each variable $z$ appears at most twice in $S'$.  Since each new variable $x_i$ also appears twice in $S'$, we conclude that  $S'$ is quadratic.
\end{proof}

\begin{lemma} \label{interLem2}
Let $S(A,Z)=1$ be a system of equations over $\Gamma$. If we assume that the system $S(A,Z)$ is  in a triangular form, then the systems $S_i$'s defined by equations  (\ref{Eqn:SC1})  and  (\ref{Eqn:SC2}) are of length  $|S_i|=O(|S|^2)$ and and $|X_i|=O(|S|)$ (constants depend on $\Gamma$).

If we assume that the system $S(A,Z)$ is quadratic and in a triangular form, then the systems $S_i$'s defined by equations (\ref{Eqn:SC1})  and  (\ref{Eqn:SC2}) are quadratic.

\end{lemma}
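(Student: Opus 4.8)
The plan is to read the size of $S_i$ directly off its defining equations \eqref{Eqn:SC1}--\eqref{Eqn:SC2}, controlling separately the number of equations, the length of a single equation, and the sizes of the auxiliary words $c_k^{(j)}$ and $\theta_m(a_s)$ produced in Proposition~\ref{Lem:RipsSela1}. Work with $S$ in the triangular normal form used there: triangular equations $z_{\sigma(j,1)}z_{\sigma(j,2)}z_{\sigma(j,3)}=1$ together with constant equations $z_s=a_s$. Since every equation of $S$ has positive length, the number $q$ of equations of $S$ is $O(|S|)$; since each triangular equation has length $3$, the total number of variable slots occurring in them is at most $|S|$; the number of constant equations is $O(|S|)$; and each coefficient satisfies $|a_s|\le|S|$.

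For the length statement I would first count the equations of $S_i$. Each equation of type \eqref{Eqn:SC1} corresponds to an identification of two slots carrying the same variable; retaining, for each variable, only enough of these to connect all of its occurrences (one per occurrence past the first), their number is at most the number of variable slots, hence $O(|S|)$, and in the quadratic case at most one per variable. Each equation of type \eqref{Eqn:SC2} corresponds to a slot holding a constant variable, so there are $O(|S|)$ of those as well; in total $S_i$ has $O(|S|)$ equations. Next, by \eqref{RepsCond1} each $c_k^{(j)}$ has length below $L=q\cdot 2^{5050(\delta+1)^6(2|A|)^{2\delta}}$, and since $\delta$ and $|A|$ depend only on $\Gamma$ this is $L=O(q)=O(|S|)$; each $x_k^{(j)}$ has length $1$; and by \cite{RS95} the canonical representative satisfies $|\theta_m(g)|\le C(|g|+m)$ for some constant $C=C(\delta,|A|)$, so that $|\theta_m(a_s)|=O(|a_s|+L)=O(|S|)$ uniformly in $m\le L$. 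Consequently an equation of type \eqref{Eqn:SC1} has length at most $2L+4=O(|S|)$, and one of type \eqref{Eqn:SC2} has length at most $2+L+|\theta_m(a_s)|=O(|S|)$. Multiplying, $|S_i|=O(|S|)\cdot O(|S|)=O(|S|^2)$.

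For the quadratic assertion, suppose $S$ is quadratic and triangular, so that each of its variables occurs exactly twice and each constant variable occupies exactly one slot $\sigma(j,k)$. Fix a variable $x_k^{(j)}$ of $S_i$. Inspecting \eqref{Eqn:SC1}--\eqref{Eqn:SC2}, the factor $x_k^{(j)}c_k^{(j)}(x_{k+1}^{(j)})^{-1}$ attached to slot $(j,k)$ lies in exactly one equation of $S_i$ (of type \eqref{Eqn:SC1} if $\sigma(j,k)$ is a proper variable, paired then with its unique partner slot, and of type \eqref{Eqn:SC2} if $\sigma(j,k)$ is a constant variable), and likewise the factor $x_{k-1}^{(j)}c_{k-1}^{(j)}(x_k^{(j)})^{-1}$ attached to slot $(j,k-1)$, which also contains $x_k^{(j)}$ (indices read cyclically modulo $3$). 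No other equation of $S_i$ involves $x_k^{(j)}$. Hence $x_k^{(j)}$ occurs exactly twice in $S_i$: once in the equation attached to slot $(j,k)$ and once in that attached to slot $(j,k-1)$, or twice in the single equation shared by the two slots when $\sigma(j,k)=\sigma(j,k-1)$. Therefore $S_i$ is quadratic.

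The delicate point is the control of the auxiliary data in the second paragraph: one needs $L=O(|S|)$ and $|\theta_m(a_s)|=O(|S|)$ uniformly over $m\le L$ and over all $O(|S|)$ coefficients, since a super-linear dependence there would forfeit the $O(|S|^2)$ bound here and, after composition with the triangulation of Lemma~\ref{interLem1}, the $O(|S|^4)$ bound recorded in Proposition~\ref{Bnd}. Granting those estimates, the quadraticity count is routine bookkeeping, once the cyclic role of the index shift $k\mapsto k+1$ in \eqref{Eqn:SC1}--\eqref{Eqn:SC2} and the degenerate case of two equal adjacent slots are handled as above.
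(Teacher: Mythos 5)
Your proposal is correct and follows essentially the same route as the paper: bound the number of equations of $S_i$ by $O(|S|)$, bound each equation's length by $O(|S|)$ via $L=O(q)=O(|S|)$ and the quasi-geodesic estimate $|\theta_m(a_s)|=O(|S|)$, and multiply; the quadraticity count via the two slots $(j,k)$ and $(j,k-1)$ containing $x_k^{(j)}$ is the paper's argument as well. Your explicit attention to the linear dependence of $L$ on $q$ and to the degenerate case of equal adjacent slots is a slight sharpening of the paper's exposition but not a different proof.
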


\begin{proof}
Fix a system $S_i$ defined by equations  (\ref{Eqn:SC1})  and  (\ref{Eqn:SC2}). First we observe that each equation in $S_i$ of the form $x_{k}^{(j)}c_{k}^{(j)}\left(x_{k+1}^{(j)}\right)^{-1} = x_{k'}^{(j')}c_{k'}^{(j')}\left(x_{k'+1}^{(j')}\right)^{-1} $ corresponds to  the  triples

\begin{eqnarray*}
\theta_{m} (g_{\sigma(j,1)}) & = & h_{1}^{(j)} c_{1}^{(j)} \left(h_{2}^{(j)}\right)^{-1} \label{CanonReps1}\\
\theta_{m} (g_{\sigma(j,2)}) & = & h_{2}^{(j)} c_{2}^{(j)} \left(h_{3}^{(j)}\right)^{-1}\\
\theta_{m} (g_{\sigma(j,3)}) & = & h_{3}^{(j)} c_{3}^{(j)} \left(h_{1}^{(j)}\right)^{-1}.\label{CanonReps3}
\end{eqnarray*}

and

\begin{eqnarray*}
\theta_{m} (g_{\sigma(j',1)}) & = & h_{1}^{(j')} c_{1}^{(j')} \left(h_{2}^{(j')}\right)^{-1} \label{CanonReps1}\\
\theta_{m} (g_{\sigma(j',2)}) & = & h_{2}^{(j')} c_{2}^{(j')} \left(h_{3}^{(j')}\right)^{-1}\\
\theta_{m} (g_{\sigma(j',3)}) & = & h_{3}^{(j')} c_{3}^{(j')} \left(h_{1}^{(j')}\right)^{-1}.\label{CanonReps3}
\end{eqnarray*}

where $\sigma(j,k)=\sigma(j',k')$. If $S$ is quadratic, for each pair  $(j,k)$, there is at most one pair $(j',k')$ such that $\sigma(j,k)=\sigma(j',k')$. Hence, there is at most one equation  $x_{k}^{(j)}c_{k}^{(j)}\left(x_{k+1}^{(j)}\right)^{-1} = x_{k'}^{(j')}c_{k'}^{(j')}\left(x_{k'+1}^{(j')}\right)^{-1} $, involving $x_{k}^{(j)}c_{k}^{(j)}\left(x_{k+1}^{(j)}\right)^{-1}$. We conclude that $x_{k}^{(j)}$ appears at most in two equations of the following form:

\begin{eqnarray*}
x_{k}^{(j)}c_{k}^{(j)}\left(x_{k+1}^{(j)}\right)^{-1} & = & x_{k'}^{(j')}c_{k'}^{(j')}\left(x_{k'+1}^{(j')}\right)^{-1} \\
x_{k-1}^{(j)}c_{k-1}^{(j)}\left(x_{k}^{(j)}\right)^{-1} & = & x_{k''}^{(j'')}c_{k''}^{(j'')}\left(x_{k''+1}^{(j'')}\right)^{-1}.
\end{eqnarray*}
Hence, $S_i$ is quadratic.


In order to find an upper bound on the length of $S_i$ we have to look at  two types of equation which appear in  $S_i$. The first type  corresponds to variables in $S$. For each occurrence of variable $z_{\sigma(j,k)} \in S$ we have an equation  $s: x_{k}^{(j)}c_{k}^{(j)}\left(x_{k+1}^{(j)}\right)^{-1} = x_{k'}^{(j')}c_{k'}^{(j')}\left(x_{k'+1}^{(j')}\right)^{-1} $. The length of such equation is bounded by $4+(|c_{k}|+|c_{k'}|)$. We have that  $|c_{k}|+|c_{k'}|$ is bounded by $2L$, where $L$ is the constant introduced in the definition of canonical representatives in the proof of Proposition \ref{Lem:RipsSela1}. Hence, we get

\begin{equation*}
|s| \leq 4+2L
\end{equation*}

The second type of equations corresponds to constants appearing in $S$. For each constant $a_s \in S$, there is an equation  $x_{k}^{(j)}c_{k}^{(j)}\left(x_{k+1}^{(j)}\right)^{-1} = \theta_{m}(a_s)$ in $S_i$, where $\theta_{m}(a_s) \in F_A$ is the label of a $(\lambda, \mu)$-quasi-geodesic path from $1$ to $a_s$ in the Cayley graph of  $\Gamma$, for some $\lambda$ and $\mu$ depending only on $\Gamma$ \cite{RS95}. Thus, $|\theta_{m}(a_s)| \leq \lambda|a|+ \mu \leq \lambda|S|+ \mu$. Therefore $2+L+  \lambda|S|+ \mu$ is an upper bound for the length of such equation.

Since there are at most $|S|$ equations in $S_i$, we get $|S_i| \leq |S|(4+2L+\lambda|S|+\mu)=O(|S|^2)$. This finishes the proof of  the lemma.
\end{proof}

Now we can prove Proposition \ref{Bnd}.

\begin{proof}(of Proposition \ref{Bnd})
 We rewrite $S(Z,A)$ as a triangular system $S'(Z',A)$ and we consider the systems $S'_i$'s defined by equations (\ref{Eqn:SC1})  and  (\ref{Eqn:SC2}) for $S'$. From Lemma \ref{interLem1} and Lemma \ref{interLem2} , we have $|S'_i|=O(|S'|^2)=O(|S|^2)$ and and $|X_i|=O(|S|)$ which proves Proposition \ref{Bnd}.

\end{proof}

We recall Theorem 1.1 in \cite{lm}:
if a quadratic equation $Q(X,A) =1$ has a solution  in a free group, then there exists a solution $\phi$  such that for each variable $x\in X$, $|\phi (x)|\leq 40n(Q)c(Q)$
for orientable equation and $|\phi (x)|\leq 150n(Q)c^2(Q)$ for non-orientable equation.

This theorem and Proposition \ref{Bnd}   imply the statement of Theorem \ref{th:1}.

Proposition \ref{Bnd}   and the result of Gutierres \cite{Gu} about the PSpace algorithm for solving equations in a free group imply the following result.
\begin{theorem}  \cite{kufl} Let $\Gamma$ be a   torsion-free hyperbolic group. There is a  PSpace algorithm for  solving equations in $\Gamma$. \end{theorem}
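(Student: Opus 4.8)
The plan is to combine the reduction of Proposition~\ref{Lem:RipsSela1} with Gutierrez's theorem that solvability of a system of equations over a free group can be decided in polynomial space, taking care that although the reduction produces exponentially many free‑group systems, each one has only polynomial size and they can therefore be processed sequentially with space reused.

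First, given an arbitrary system $S(Z,A)=1$ over $\Gamma$, I would triangulate it using Lemma~\ref{interLem1} and then run the Rips--Sela reduction exactly as in the proof of Proposition~\ref{Lem:RipsSela1}: for each integer $m\le L$ and each admissible tuple of words $c_k^{(j)}\in F(A)$ of length $<L$ satisfying conditions (i) and (ii), form the free‑group system $S_i(X_i,A)=1$ given by equations (\ref{Eqn:SC1})--(\ref{Eqn:SC2}), together with the map $\rho_i$. Since $\delta$ and $|A|$ depend only on $\Gamma$, the factor $2^{5050(\delta+1)^{6}(2|A|)^{2\delta}}$ is a constant, so $L=O(|S|)$; hence such a tuple is described by $O(|S|)$ words of length $O(|S|)$, i.e.\ by $O(|S|^{2})$ bits, and the collection of all tuples (together with the value of $m$) can be enumerated by a loop counter occupying \emph{polynomial} space. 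By Proposition~\ref{Bnd} each resulting $S_i$ has length $O(|S|^{4})$, and writing $S_i$ down — in particular computing the canonical representatives $\theta_m(a_s)$ of the coefficients — can be carried out in space polynomial in $|S|$.

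The algorithm then loops over all such tuples: for the current tuple it constructs $S_i(X_i,A)=1$, verifies conditions (i) and (ii) (the first is a length check, the second an instance of the word problem in $\Gamma$, both decidable well inside PSpace), and invokes Gutierrez's PSpace procedure \cite{Gu} to decide whether $S_i=1$ is solvable over $F(A)$; it accepts iff some iteration accepts. The workspace needed to hold the current $S_i$ and to run Gutierrez's algorithm on it is polynomial in $|S_i|=O(|S|^{4})$, hence polynomial in $|S|$, and this workspace is released and reused before the next tuple is examined, so the whole procedure runs in polynomial space. Correctness is immediate from Proposition~\ref{Lem:RipsSela1}: part (ii) guarantees that if $S=1$ is solvable over $\Gamma$ then some $S_i=1$ is solvable over $F(A)$, while part (i) guarantees that any solution $\phi$ of some $S_i=1$ yields, via $\overline{\rho_i\phi\pi}$, a $\Gamma$‑homomorphism $\Gamma_S\to\Gamma$, i.e.\ a solution of $S=1$ over $\Gamma$.

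The main obstacle is bookkeeping rather than a new idea: one must check that the Rips--Sela canonical representatives $\theta_m(\cdot)$ and all the auxiliary computations (enumerating the $c_k^{(j)}$, the word‑problem instances in $\Gamma$ used to test condition (ii)) are computable in space polynomial in $|S|$, and that the "exponential disjunction" over the systems $S_i$ stays in PSpace because the $S_i$ are handled one at a time with space reused. This last point is precisely where it matters that each individual $|S_i|$ is only polynomially large — the content of Proposition~\ref{Bnd} — rather than merely that there are finitely many of them.
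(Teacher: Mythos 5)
Your proof is correct and follows exactly the route the paper takes: the paper derives this theorem in one line from Proposition~2 (the $O(|S|^4)$ bound on the reduced free-group systems) together with Gutierrez's PSpace result for free groups. Your additional bookkeeping --- enumerating the exponentially many tuples $(m, c_k^{(j)})$ sequentially with reused workspace, and checking that each individual $S_i$ is only polynomially large --- is precisely the argument the paper leaves implicit, so there is nothing to add.
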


\section{Quadratic equations in free products}

In this section we will prove results about quadratic equations in free
 products, which will be used for the proof of Theorem \ref{th:2} in the next section. Suppose an element $h$ in a free product of a free group and free abelian groups is written in canonical form as $h=h_1\ldots h_k$. Then the length of $h$ is defined as the sum of the lengths of $h_1,\ldots, h_k$.

\begin{theorem}  \label{th:5}   Let $Q$ be a quadratic word. If the equation $Q = 1$
 is solvable in a free product of a free group and free abelian groups
 of finite ranks, then there exists a solution $\alpha$ such that for
 any variable $x$,
$|\alpha (x)|< Nn(Q)((n(Q)+c(Q))^2$ if $Q=1$ is orientable, and
 $|\alpha (x)|< Nn^2(Q)((n(Q)+c(Q))^5$ if $Q=1$ is non-orientable.
 Here $n(Q)$ denotes the total number of variables in $Q$ and $c(Q)$
 the total length of coefficients occurring in $Q.$  One can take $N=400$ for orientable equation and $N=9000$ for non-orientable equation.\end{theorem}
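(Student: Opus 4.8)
The plan is to reduce the statement about an arbitrary quadratic equation over a free product $G = F \ast A_1 \ast \cdots \ast A_k$ (with $F$ free and each $A_i$ free abelian of finite rank) to the two classical cases: quadratic equations over a free group, where the bounds $2s$ (orientable) and $12s^4$ (non‑orientable) of \cite{kv,lm} apply, and quadratic equations over a single free abelian group, where minimal solutions have linearly bounded length. First I would fix a minimal solution $\alpha$ of $Q=1$ in $G$ and analyze the Bass--Serre/normal‑form structure of the values $\alpha(x)$: each $\alpha(x)$ has a syllable decomposition into alternating pieces from the factors, and cancellation in the word obtained by substituting $\alpha$ into $Q$ is governed by how these syllables match up across the (at most $n(Q)$) pairs of variable occurrences and the coefficient syllables. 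The standard surface‑theoretic picture — reading $Q$ around a once‑punctured surface whose genus is $O(n(Q))$ — lets me cut along the curves carrying the "free" interactions and the curves landing inside a conjugate of some $A_i$.

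The key steps, in order, are: (1) Put $Q$ in standard form (product of commutators and/or squares times a product of conjugated coefficients), at a cost polynomial in $n(Q)+c(Q)$; this is the free‑product analogue of the triangulation in Lemma~\ref{interLem1} and only multiplies the parameters by a constant. (2) For the fixed minimal solution, decompose each $\alpha(x) = u_x \, w_x \, u_x^{-1}$ or a product of such pieces, where the "core" pieces $w_x$ are either reduced words entirely in one factor or have an essential free part; the conjugating prefixes $u_x$ are forced, up to bounded ambiguity, by matching syllables with adjacent occurrences. (3) Collect, for each factor $A_i$ that appears, the subequation induced on the $A_i$‑syllables: this is a quadratic equation over the abelian group $A_i$ whose size is polynomial in $n(Q)+c(Q)$, so by the abelian (linear‑algebra) bound its minimal solution has length polynomial — in fact the abelian case contributes only a polynomial, not an exponential, factor. (4) Collect the residual quadratic equation over the free group $F$ obtained after quotienting out the abelian syllables; apply \cite{kv}, Corollary~1 (or \cite{lm}, Theorem~1.1) to bound its minimal solution by $2s'$ or $12s'^4$, where $s' = O((n(Q)+c(Q)))$ for the orientable part and the non‑orientable case picks up the fourth power — this is where the exponent jumps, and iterating the cut‑and‑paste to control the conjugators contributes the remaining factors, giving the stated $N((n(Q)+c(Q))^2$ and $N((n(Q)+c(Q))^{20}$. (5) Reassemble: the total length of $\alpha(x)$ is bounded by a constant times the product of the bounds obtained for the conjugating prefixes, the free core, and the abelian cores, and each of those is polynomial in $n(Q)+c(Q)$ with the exponents as claimed; track the constants so that $N$ is computable.

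The main obstacle I expect is step (2)–(3): controlling the conjugating elements $u_x$ that glue the free part to the abelian parts. In a free product a minimal solution need not have the pieces "aligned", so one must argue that if some $u_x$ is long then a length‑reducing surgery (Whitehead‑type move, or a re‑cutting of the surface) is available, contradicting minimality — and one must do this uniformly enough that the bound on $|u_x|$ is polynomial in $n(Q)+c(Q)$ rather than merely finite. A clean way to handle this is to pass to the canonical‑form machinery for quadratic equations over free products already in the literature and quote it, but if one wants a self‑contained argument the careful accounting of how each surgery decreases a well‑chosen complexity (total syllable length plus number of syllables, say) while not increasing $n(Q)+c(Q)$ is the delicate point. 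The non‑orientable case additionally requires the doubling trick that turns a non‑orientable surface equation into an orientable one of quadratically larger size, which is exactly what converts the exponent $2$ into roughly $20$ after it is fed through the fourth‑power free‑group bound and the conjugator estimates.
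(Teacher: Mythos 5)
Your high-level strategy — split a minimal solution into pieces living in the free factor and pieces living in abelian factors, bound each via the known free-group and abelian estimates, then reassemble — is in the same spirit as the paper, but the step you yourself flag as delicate (steps (2)--(3), controlling the conjugating prefixes $u_x$) is precisely the content of the theorem, and your proposal never actually supplies the argument. You say one should either perform length-reducing surgeries and track a complexity, or ``quote the canonical-form machinery for quadratic equations over free products already in the literature''; but the paper does not quote such machinery, it \emph{builds} it. Concretely, the paper proves Proposition~\ref{th:6}: any solution of a quadratic equation in a free product arises from a genus-$g$ \emph{multi-form}, a combinatorial object obtained by taking the surface-with-graph picture from Olshanskii, collapsing all $\phi$-trivial subpaths, then, for each maximal connected subgraph $K$ labelled by a single free factor, splitting boundary vertices into $K$- and $\Complement{K}$-bundles and replacing the $K$-part by a joint extension at the resulting new vertices. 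Lemma~\ref{genuslemma} keeps the genus accounting honest across this cut-and-paste, and the genus bookkeeping (Definition~\ref{multiform}) is what makes the total degree of freedom in the extensions bounded by $n(Q)+c(Q)$. The \emph{augmentation} step is then what bounds your ``conjugators'': it rewrites each extended vertex $a_1^{-1}\psi_1 a_2,\ldots$ as $A_1^{-1}A_2,\ldots,A_l^{-1}WA_1$ with $A_i=\psi_1\cdots\psi_{i-1}a_i$, so every new letter has length $\leq M = n(Q)+c(Q)$ by construction, with no surgery or minimality argument required. Your surgery plan might be made to work, but as stated there is a genuine gap: you give no invariant that decreases, no bound on the number of moves, and no proof that a long $u_x$ forces a reducing move.

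Your derivation of the exponents is also not the paper's, and I don't think it is correct as written. The paper gets $2$ and $20$ as follows: after augmentation, the words $W_{i,s}$ are coefficients for a standard quadratic equation inside a single free factor, whose minimal solution has components bounded linearly in $M_i$ (orientable) or by a degree-$4$ polynomial in $M_i$ (non-orientable); summing over the extensions, the coefficients of the final free-group quadratic equation have total length $O(M^2)$ (orientable) or $O(M^5)$ (non-orientable); feeding these through the free-group bounds $2s$ and $12s^4$ from \cite{kv,lm} yields $O(M^2)$ and $O(M^{20})$. Your account — a doubling trick making the equation quadratically larger, then the fourth-power free-group bound, then conjugator estimates — would at best give $(M^2)^4 = M^8$ times whatever the conjugator correction is, and you never pin that correction down. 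The exponent $20$ is not an accident of doubling; it is $5\times 4$, coming from the degree-$5$ coefficient blowup composed with the degree-$4$ free-group estimate.
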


In this section we use topological and graph theoretic methods to work
with quadratic equations in free groups and free products of groups,
introduced in \cite{C}, \cite{Olshanskii-1989}, \cite{Comerford-Edmunds-1981},
\cite{CE}, \cite{V,V2,V5}. 

While all necessary definitions are given here, further relevant background on quadratic equations in free groups and free products may be found in the works mentioned above. Here, we do not distinguish edges of graphs from their labels, in order to avoid complicated notation.

\begin{definition} 
We define the \emph{orientable genus}  of an $m$-tuple
$\{C_i,C\} \subseteq G$, denoted by $Genus(C_1,C_2,\ldots,C)$,
 to be the least integer $g\geq 1$ for
which the equality \bel{eqn:orientable} \left( \prod_{i=1}^g [x_i,y_i]
\right)\left(\prod_{j=1}^{m-1} z_j^\mo C_j z_j\right) C = 1 \eee  holds for some $\{x_i,y_i,z_j\} \subseteq G$, where $[x,y]=x^\mo y^\mo
xy$. We say, that the orientable genus of $\{C_i,C\} \subseteq G$ is 0
if  there exists a tuple $\{z_1,\ldots ,z_{m-1}\}$ such that
$$(\prod_{j=1}^{m-1} z_j^\mo C_j z_j) C = 1.$$ If such integer $g$ does not exist, the genus is not defined.
\end{definition}

\begin{definition} For a group $G$, 
we define the \emph{non-orientable genus}  of an $m$-tuple
$\{C_i,C\} \subseteq G$, denoted by $Sq(C_1,C_2,\ldots,C)$,
 to be the least integer $g\geq 1$ for
which the equality \bel{eqn:non-orientable}
 \left(\prod_{i=1}^g x_i^2\right)
\left(\prod_{j=1}^{m-1} z_j^\mo C_j z_j\right) C = 1 \eee holds
for some  $\{x_i,z_j\} \subseteq G$.
We say, that the non-orientable genus of $\{C_i,C\} \subseteq G$ is 0
if  there exists a tuple $\{z_1,\ldots ,z_{m-1}\}$ such that
$$(\prod_{j=1}^{m-1} z_j^\mo C_j z_j) C = 1.$$ If such integer $g$ does not exist, the genus is not defined.
\end{definition}

\begin{definition}
 An \emph{orientable quadratic set of words}  is a quadratic set
 of cyclic  words
$ w_1,w_2,\ldots ,w_{k}$
 (a cyclic word is the orbit of a linear word under cyclic permutations)
 in some alphabet $a_1^{\pm 1},a_2^{\pm 1},\ldots$,  such that every
 $a_i$ appears a total of twice in $w_1,w_2,\ldots,w_k$, once as $a_i$ and once as $a_i^{-1}$.
\end{definition}

\begin{definition}

 A \emph{non-orientable quadratic set of words} is a quadratic
 set of cyclic  words
$ w_1,w_2,\ldots ,w_{k}$ in some alphabet $a_1^{\pm 1},a_2^{\pm 1},\ldots$, such that every
  $a_i^{\pm 1}$ appears a total of twice in $w_1,w_2,\ldots,w_k$, and there is at least one $i$ such that $a_i^{\pm 1}$ appears twice with the same exponent.
\end{definition}

Similar to the case of Wicks forms (see \cite{V},\cite{kv} for background on Wicks forms), we associate a union of surfaces to a quadratic set of words. If the quadratic set is orientable, then all surfaces are orientable, whereas if the quadratic set is non-orientable then at least one surface is non-orientable.  This is done by taking a set of disks with boundaries labeled by words from the quadratic set, and identifying edges which have the same labels, respecting orientation.
The {\it genus}  of a quadratic set of words is defined as the sum 
of genera of the surfaces obtained from $k$ 
disks with words $ w_1,w_2,\dots ,w_{k}$ on their boundaries.
We will denote the genus by $Genus( w_1,w_2,\dots ,w_{k})$ in the orientable case, and by $Sq( w_1,w_2,\dots ,w_{k})$ in the non-orientable case.
If a set of words is not strictly quadratic, and each letter appears an even number of times, topological genus may be defined.

\begin{definition}
Let elements $\{C_i\}$ be represented in $G$ as a specialization  
of a quadratic set of words (not necessarily uniquely). 
The minimum of topological genera of such quadratic sets of words 
is the \emph{topological genus} of the tuple $\{C_i\}$, $i=1 \ldots m$.

\end{definition}
 The genus given in Definitions 1 and 2 is called \emph{algebraic}.
 
 \begin{lemma}
  
  The algebraic genus of a tuple $\{C_i, \i=1 \ldots m\}\subseteq G$  is equal to the
  topological one.
 \end{lemma}
 
 {\bf Proof.} 
 Let $g$ be the algebraic genus of a tuple $\{C_i, \i=1 \ldots m\}\subseteq G$. Then $\{C_i, i=1 \ldots m\}$ can be presented as a specialization
of a quadratic set of words of genus $g$. As the quadratic set of words
we may take a set $\prod_{i=1}^g [x_i,y_i]t_1,t_1^{-1}t_2,\dotsc,t_{m-2}^{-1}t_{m-1},t_{m-1}^{-1}$, for example. Using the Euler characteristic formula,
we see that the genus of this quadratic set is $g$.
Therefore, the algebraic genus is greater than or equal to
the topological one. 

Now let the topological genus of a tuple
$\{C_i, i=1 \ldots m\}$, be $g$. Then
there is a quadratic
set of words $U_i$, $i=1 \ldots m$, of genus $g$ such that
a system $C_i=U_i$, $i=1 \ldots m$, has a solution in $G$. Consider
first the case
when the quadratic set of words $U_i$, $i=1 \ldots m$, defines one surface of genus $g$.
Then the system 
$C_i=U_i$, $i=1 \ldots m$, is equivalent to one quadratic equation. Indeed, we can express a letter that occurs only once in $U_m$ from the equation $C_m=U_m$, and substitute it into the remaining system, then continue eliminating letters until we get one equation. Then the algebraic genus
is less than or equal to $g$, so the algebraic and topological genera coincide.

If the quadratic set defines several surfaces, then the equalities
(8) and (9) can be presented as several independent ones of the same form. We complete the proof by induction, using the above arguments for each connected component.

The lemma is proved.

\bigskip

 Consider the orientable (non-orientable) compact surface $S$ associated to an orientable (non-orientable)
 quadratic set. This surface has an embedded graph
 $\Gamma\subset S$ such that $S\setminus \Gamma$ is a set of open polygons.
 This construction also works
 in the opposite direction. Given a graph $\Gamma\subset S$
 with $e$ edges on an orientable (non-orientable) compact connected surface $S$ of genus $g$ such that $S\setminus \Gamma$ is a collection of disks, by labeling and orienting the edges of $\Gamma$, and cutting $S$ open along $\Gamma$, we get an orientable (non-orientable) quadratic set of words of genus $g$. The associated orientable (non-orientable) quadratic set of words can be read on the boundary of the resulting polygons.     
 We henceforth identify an orientable (non-orientable)
 quadratic set with the associated embedded graph $\Gamma\subset S$,
 allowing the language of vertices and edges of orientable (non-orientable)
 quadratic sets to be used. Moreover, the quadratic set can be associated with 
a set $T$ of closed
paths in $\Gamma$, where every edge of $\Gamma$ is traversed exactly twice.
 We will call
$T$ a {\em quadratic set of circuits.} If the genus of the surface $S$ is $g$,
then the genus of $T$ is $g$, in the sense of Definitions 1 and 2 (see \cite{C},\cite{CE}, for
example).

\begin{definition}
Let $v$ be a vertex of $\Gamma$ with edges $a_1,...,a_l$ originating from $v$.
If $\Gamma$ corresponds to an orientable quadratic set $Q$, then this set
contains subwords $a_1^{-1}a_2,\ldots,a_{l-1}^{-1}a_l,a_l^{-1}a_1$ (which may be contained in different words of the set).
If $Q$ is non-orientable, then some of these subwords may be reversed.
This set of subwords will be called \emph{girth} of the vertex $v$.
We  say that the vertex $v$ is \emph{extended} by a 
 word $W$ in generators
of some group $H$, $\psi_1\ldots\psi_l=W$ in $H$, $\psi_i \in H$, 
if subwords $a_1^{-1}a_2,\ldots,a_{l-1}^{-1}a_l,a_l^{-1}a_1$  are replaced
by a new set of words
 $a_1^{-1} \psi_1 a_2,\ldots,a_{l-1}^{-1} \psi_{l-1} a_l,a_l^{-1} \psi_l a_1$.
 If a subword $a_j^{-1}a_{j+1}$
is reversed, i.e. occurs as $a_{j+1}^{-1}a_j$, then the corresponding $\psi$
appears in the product with negative exponent.

\end{definition}



\begin{example}
Consider the following non-orientable genus 2 word \newline
$AB^{-1}AC^{-1}BC^{-1}$. The corresponding graph $\Gamma$ consists
of three multiple edges $A,B,C$ originating in a vertex $v$ and terminating
in a vertex $u$ (see Figure 1).
We say that a vertex $v$ is extended by a word $W$ if the word
$AB^{-1}AC^{-1}BC^{-1}$ is replaced by a word
$AB^{-1}\psi_3AC^{-1}\psi_2BC^{-1}\psi_1$, such that
 $\psi_1^{-1}\psi_2\psi_3=W \in H$.

\end{example}
\begin{definition}\label{genusExtension}

Let $Q$ be a 
 quadratic set, and $\Gamma$
be the associated graph.
Let vertices $v_1,\ldots,v_t$ of $\Gamma$ be extended by words
 $W_1,\ldots,W_t \in H$.
We will say, that an \emph{orientable (non-orientable) genus $g$ joint extension
(or $g$-extension)}
$\Delta$ of $Q$ is constructed on these $t$ vertices, if

\begin{itemize}
\item  the $t$-tuple $(W_1,...,W_t) $ is orientable and the orientable
 genus of $(W_1,\ldots,W_t) $
is $l=g-t+1$ in $H$ or 
\item the $t$-tuple $(W_1,\ldots,W_t) $ is  non-orientable and the non-orientable
 genus of $(W_1,\ldots,W_t) $
is $l=g-2t+2$ in $H$. 
\end{itemize}
 
 The sum of the lengths of $W_i$ will be called the length
 of the extension.
\end{definition}
\begin{figure}[ht!]
\centering
\includegraphics[width=40mm]{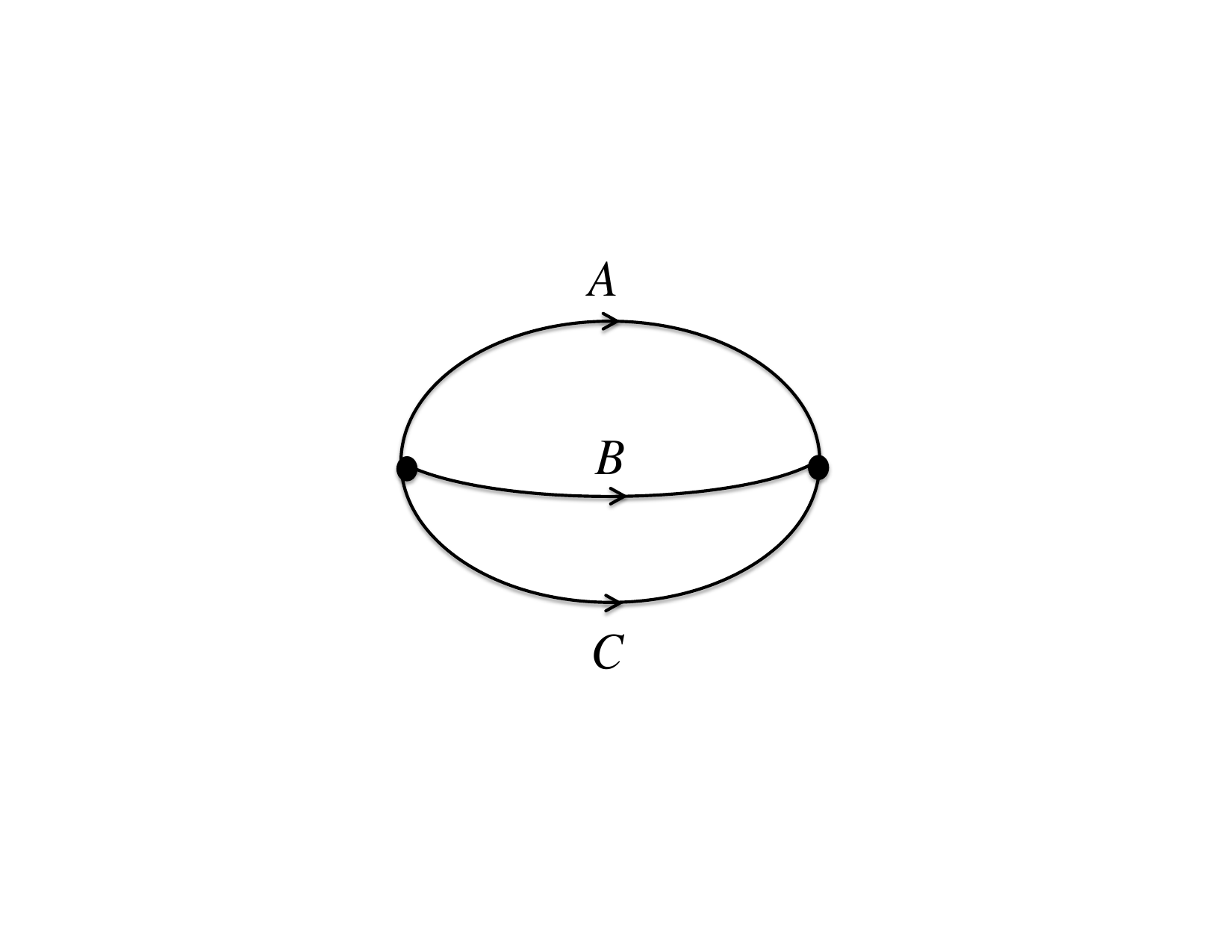}
\caption{ }
\label{bundle}
\end{figure}
\label{example1}
 \begin{definition} \label{multiform}

Let $G$ be a free product (finite or infinite) of groups $G_1,G_2,\ldots$.
Let $Q$ be an orientable (non-orientable) quadratic set of genus $k$ and
 $\Gamma$
be the associated graph. We  separate all vertices of $\Gamma$
into $p+1$ disjoint sets $B_0,B_1,\ldots, B_p$.  Leave the  vertices 
of $B_0$
 without changes. The genus $g_i$ orientable, or non-orientable, joint
 extension by a word of some free factor is constructed
on the vertices of the set $B_i$, for $i=1,\ldots,p$ (different sets can be
extended by words of the same free factor, or of different free factors). 
We consider the following cases:

\begin{enumerate}

\item  \label{case 2} $Q$ is non-orientable. Then we define $n$ to be 
  \begin{equation*}
  n=k+\sum_{  i :g_i-extension  \text{is non-orientable }} g_i +\sum_{i:g_i-extension \text{ is orientable }} 2g_i
  \end{equation*}

\item  \label{case 3} $Q$ is orientable and at least one of the $g_i$-extensions is non-orientable. Then we define $n$ to be 
  \begin{equation*}
  n=2k+\sum_{  i :g_i-extension  \text{is non-orientable }} g_i +\sum_{i:g_i-extension \text{ is orientable }} 2g_i
  \end{equation*}
  
 \item  \label{case 4} $Q$ is orientable and all $g_i$-extensions are orientable. Then we define $n$ to be 
  \begin{equation*}
  n=k+\sum_{i=1}^p g_i 
  \end{equation*}
\end{enumerate} 
If the quadratic set and
 all the extensions are orientable, we call the resulting set of words an
 \emph{orientable multi-form of genus $n$}. If the quadratic set is non-orientable, or at least one of the extensions if non-orientable, then we call the resulting set of words a \emph{non-orientable multi-form of genus $n$}. Denote the multiform by $\mathcal{A}$.

\end{definition}

\begin{definition}

The set $Q$ from Definition \ref{multiform}
 will be called a \emph{framing set of words of the multi-form $\mathcal{A}$}.

\end{definition}

\begin{definition}
Let a quadratic set $Q$ be a collection of words in a group
alphabet $\mathcal{B}$, and $\mathcal{A}$ be a multi-form such that $Q$ is the framing set of words for this multi-form. Let $V_1,\ldots ,V_l$ be a set of elements of a free product $G$, which is obtained from the multi-form $\mathcal{A}$ over the free product, by substitution of letters of the alphabet $\mathcal{B}$ with elements of $G$. We will say that the family of words $V_1,\ldots , V_l$ is obtained from $\mathcal{A}$
by a \emph{permissible substitution}, if elements of the same free factor don't occur in the words $V_1,\ldots , V_l$ side
by side.

\end{definition}

\begin{example}
We give an example of a non-orientable multi-form of non-orientable genus $13$
(in the sense of Definition 7):
$$V_1=A\xi_1ED\psi_1C^{-1}B\xi_3A^{-1}FB\xi_2EG_1G_2H_1O_2O_1^{-1}I_1,$$
$$V_2=C\psi_2H_2O_2ZG_2I_2I_1F,$$
$$V_3=D\psi_3H_2H_1^{-1}I_2O_1ZG_1,$$
where $\xi_1\xi_2^{-1}\xi_3=U_1$, $\psi_1\psi_2\psi_3^{-1}=U_2$, and the orientable genus of $(U_1,U_2)$ in some free factor $G_i$ is 3, where $G_i$ is a free group.

If we write the words $V_1,V_2$ and $V_3$ around $3$ disks (ignoring $\psi_i$'s and $\xi_i$'s) and identify edges according to their labels, we get a non-orientable surface of genus $5$. The graph $\Gamma$ is shown in Figure \ref{fig:ex2}. $\Gamma$ has $9$ vertices, $15$ edges, and  $3$ faces, because there were $3$ disks. The vertices of $\Gamma$ are marked with numbers from
1 to 9 , see Figure \ref{fig:ex2}. All vertices of $\Gamma$ are separated in two sets, $B_0$ and $B_1$, where $B_0=\{1,3,4,5,6,7,9\}$
and $B_1=\{2,8\}$, where there is an extension performed on the vertices of
$B_1$. Now we use the formula $v-e+f=2-g$. The genus of the extension obtained by $U_1$ and $U_2$ is $3+(2-1)=4$, since $(U_1,U_2)$ is an orientable set (see Definition \ref{genusExtension}). Now by Definition \ref{multiform}, the genus of the multiform is $5+2(4)=13$ (since $V_1,V_2$ and $V_3$ define a non-orientable surface and $(U_1,U_2)$ is orientable,
 we use case \ref{case 2} in Definition \ref{multiform}).

\end{example}
\begin{figure}[ht!]
\labellist
\hair 2pt

\pinlabel $A$ at 67 130
\pinlabel $B$ at 86 76
\pinlabel $C$ at 210 28
\pinlabel $D$ at 263 88
\pinlabel $E$ at 145 88
\pinlabel $F$ at 26 113
\pinlabel $G_1$ at 248 139
\pinlabel $G_2$ at 210 155
\pinlabel $H_1$ at 258 198
\pinlabel $H_2$ at 306 144
\pinlabel $I_1$ at 112 200
\pinlabel $I_2$ at 186 217
\pinlabel $O_1$ at 146 178
\pinlabel $O_2$ at 240 171
\pinlabel $Z$ at 172 121
\pinlabel $1$ at 70 178
\pinlabel $2$ at 68 99
\pinlabel $3$ at 70 40
\pinlabel $4$ at 210 90
\pinlabel $5$ at 150 160
\pinlabel $6$ at 230 211
\pinlabel $7$ at 286 174
\pinlabel $8$ at 303 99
\pinlabel $9$ at 158 213

\endlabellist
\centering
\includegraphics[scale=1]{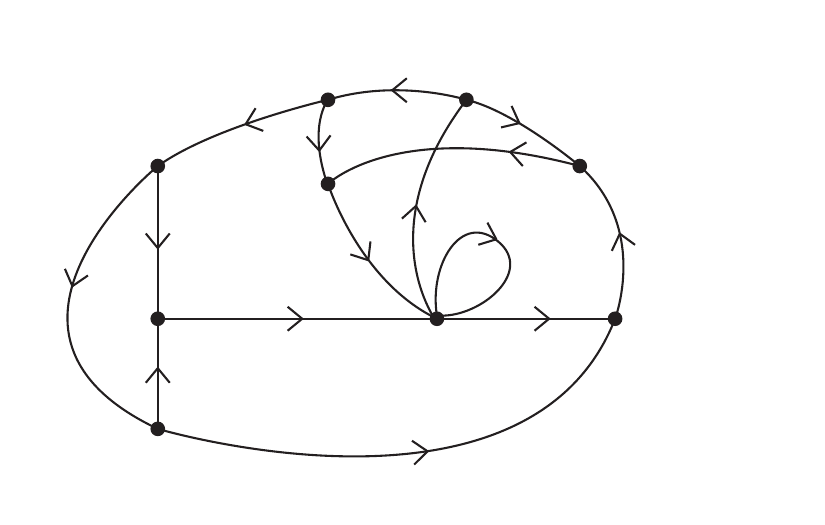}
\caption{}
\label{fig:ex2}
\end{figure}

 As an example of case \ref{case 3}, one  can consider
 $A\psi_1B\psi_4 A^{-1}\psi_3B^{-1}\psi_2,$ where $\psi_1\psi_2\psi_3\psi_4$ is a square in some free factor.
This is a non-orientable multi-form of genus 3.

\begin{lemma} \label{genuslemma}
Let $\Delta$ be a connected genus $k$ surface with $t$ holes, and with a quadratic set of words $u_1,\ldots, u_t$ written on the boundaries of those holes. Let
$\Delta'$ be the genus $n$ surface obtained by identification of
the holes in $\Delta$ according to their labels. Let  $\Delta_1$ be
the compact closed surface obtained from $t$ disks with $u_1,\ldots,u_t$
written on their boundaries, by identifying the 1-cells according to their labels. Then we have the following cases.

\begin{enumerate}[(i)]
\item If $\Delta$ and the set $(u_1,\ldots,u_t)$ are both orientable, then
   \begin{equation*}
   Genus(u_1,\ldots, u_t)=Genus\Delta_1=n-k-t+1.
   \end{equation*}
   
 \item \label{nonorientableCase} If $\Delta$ and the set $(u_1,\ldots,u_t)$ are both non-orientable, then 
    \begin{equation*}
   Sq(u_1, \ldots, u_t)=Sq\Delta_1=n-k-2t+2.
   \end{equation*}
   
\item If $\Delta$ is orientable and the set $(u_1,\ldots,u_t)$ is non-orientable, then
   \begin{equation*}
   Sq(u_1, \ldots, u_t)=Sq\Delta_1=n-2k-2t+2.
   \end{equation*}

\item If $\Delta$ is non-orientable and the set $(u_1,\ldots,u_t)$ is  orientable, then
    \begin{equation*}
   Sq(u_1, \ldots, u_t)=Sq\Delta_1=\frac{n-k-2t+2}{2}.
   \end{equation*}
\end{enumerate}
\end{lemma}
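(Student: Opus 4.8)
\textbf{Proof proposal for Lemma \ref{genuslemma}.}

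The plan is to treat all four cases uniformly through Euler characteristic, and only at the end split according to whether the surfaces involved are orientable or non-orientable. The three surfaces in play are $\Delta$ (connected, genus $k$, with $t$ boundary circles), $\Delta'$ (obtained from $\Delta$ by gluing the boundary circles in pairs according to the labels of the quadratic set $\{u_1,\dots,u_t\}$, genus $n$), and $\Delta_1$ (obtained from $t$ \emph{disks} bounded by $u_1,\dots,u_t$ by the same gluing of 1-cells). First I would record the cell structure: writing the quadratic set $\{u_1,\dots,u_t\}$ puts $e$ edges and some number $V$ of vertices on the common 1-skeleton, with each edge traversed exactly twice. Since each $u_i$ is a closed edge-path, the total number of edge-occurrences is $\sum_i \ell(u_i)=2e$. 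Building $\Delta_1$ from $t$ disks along this 1-skeleton gives $\chi(\Delta_1)=V-e+t$. Building $\Delta'$ instead glues the same $t$ boundary circles to $\Delta$, which already has Euler characteristic $\chi(\Delta)=2-2k-t$ in the orientable case and $2-k-t$ in the non-orientable case; attaching the 1-skeleton along the $t$ boundary circles and noting that $\Delta\setminus(\text{1-skeleton})$ is $\Delta$ minus its collar, one gets $\chi(\Delta')=\chi(\Delta)-\chi(\bigsqcup_i S^1)+(V-e)=\chi(\Delta)+V-e$, since $\chi(S^1)=0$. Hence the key identity is $\chi(\Delta')-\chi(\Delta_1)=\chi(\Delta)-t$, independent of all the gluing combinatorics.

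Next I would convert Euler characteristics to genera. For a closed surface, $\chi=2-2g$ if orientable and $\chi=2-g$ if non-orientable; the same conversion applies to $\Delta$ via its double or via capping, but since $\Delta$ has $t$ boundary circles I use $\chi(\Delta)=2-2k-t$ or $2-k-t$ as above. The surface $\Delta_1$ carries the genus $g_{\{u_1,\dots,u_t\}}$ of the quadratic set by definition, and it is orientable exactly when the set $\{u_1,\dots,u_t\}$ is (this is the standard dictionary between quadratic sets of words and surfaces recalled before the lemma), so $g_{\Delta_1}=g_{\{u_1,\dots,u_t\}}$ is immediate. In case (i), all three surfaces are orientable: $(2-2n)-(2-2g_{\Delta_1})=(2-2k-t)-t$ gives $g_{\Delta_1}=n-k-t+1$, as claimed. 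In case (ii), $\Delta$, $\Delta'$, $\Delta_1$ are all non-orientable, so $(2-n)-(2-g_{\Delta_1})=(2-k-t)-t$ yields $g_{\Delta_1}=n-k-2t+2$. In case (iii), $\Delta$ is orientable but the set is not, so $\Delta_1$ and (since gluing a non-orientable configuration makes the result non-orientable) $\Delta'$ are non-orientable: $(2-n)-(2-g_{\Delta_1})=(2-2k-t)-t$ gives $g_{\Delta_1}=n-2k-2t+2$. In case (iv), $\Delta$ is non-orientable while the set $\{u_1,\dots,u_t\}$ is orientable, so $\Delta_1$ is orientable while $\Delta'$, containing $\Delta$, is non-orientable; here one must be slightly careful — I would compute with $\chi(\Delta')=2-n$ (non-orientable), $\chi(\Delta_1)=2-2g_{\Delta_1}$ (orientable), $\chi(\Delta)=2-k-t$, obtaining $(2-n)-(2-2g_{\Delta_1})=(2-k-t)-t$, i.e. $2g_{\Delta_1}=n-k-2t+2$, which is the stated formula $g_{\{u_1,\dots,u_t\}}=(n-k-2t+2)/2$.

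The main obstacle, and the only place real care is needed, is verifying that $\Delta'$ is genuinely a closed surface of the asserted genus and, in particular, getting the orientability bookkeeping right in each case: in cases (iii) and (iv) one has to argue that mixing an orientable piece with a non-orientable gluing pattern (or vice versa) forces the glued-up closed surface to be non-orientable, which is clear because a non-orientable quadratic set contains a letter repeated with the same exponent, producing an embedded Möbius band after gluing. I would also want to double-check the claim that $S\setminus\Gamma$ being a union of disks — part of the hypothesis that $\Delta_1$ is a single closed surface built from $t$ disks — is exactly what makes $\chi(\Delta_1)=V-e+t$ valid with no correction terms; this is where the quadratic hypothesis (each edge traversed exactly twice) is used, guaranteeing that the edge-path data glues the $t$ disks into a genuine closed surface rather than a more degenerate complex. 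Everything else is the arithmetic of Euler characteristics above, which is routine once the single identity $\chi(\Delta')-\chi(\Delta_1)=\chi(\Delta)-t$ is in hand.
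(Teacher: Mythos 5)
Your proof is correct, but it takes a genuinely different route from the paper's. The paper proves the lemma (explicitly only in case (ii), asserting the others are similar) by exhibiting $\Delta'$ as an explicit connected sum $\Delta\#\Delta_1\#T\#\cdots\#T$ with $t-1$ torus summands, and then reducing to the normal form of closed surfaces via the relation $T\#P\cong P\#P\#P$ to read off the genus. You instead bypass the connected-sum decomposition entirely and work with Euler characteristics of the common CW structure, deriving the single identity $\chi(\Delta')-\chi(\Delta_1)=\chi(\Delta)-t$ and then converting to genus case by case. Your route is more elementary and more uniform: it treats all four cases by the same computation, and it does not require justifying the connected-sum decomposition, which the paper states without proof and which is the least obvious step of their argument (it is true, and one can check it agrees with your identity via $\chi(A\#B)=\chi(A)+\chi(B)-2$, but it deserves an argument). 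What the paper's approach buys is a more vivid geometric picture of how $\Delta'$ is assembled from $\Delta$ and $\Delta_1$. The orientability bookkeeping you supply for cases (iii) and (iv) — that a letter repeated with the same exponent yields an embedded M\"obius band in $\Delta'$, and that $\Delta'$ inherits non-orientability from the subsurface $\Delta$ — is exactly the point that needs care in either approach, and you have it right.
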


\begin{proof}
We give a proof for the case \eqref{nonorientableCase}. The other cases can be proved in a similar way.  Let $\Delta_1$ be the surface obtained by writing the words $u_1,\ldots,u_t$ around the boundaries of $t$ disks and identifying the 1-cells according to their labels, as in the statement of the Lemma. Then the surface $\Delta'$ is a connected sum of $\Delta$, $\Delta_1$, and $t-1$ tori:

\begin{equation*}
\Delta'=\Delta \# \Delta_1 \# \underbrace{T \# \dotsb\# T}_\textrm{$(t-1)$ times}.
\end{equation*}

Since $\Delta $ and $\Delta_1$ are both non-orientable, this connected sum is the same as a connected sum of $k+Sq{\Delta_1}$ projective planes and $t-1$ tori. Hence, 

\begin{equation*}
\Delta'= \underbrace{P \# \cdots \#P}_\textrm{$(k+g_{\Delta_1})$ times} \# \underbrace{T \# \cdots \# T}_\textrm{$(t-1)$ times}.
\end{equation*}

Since the connected sum of a torus and a projective plane is homeomorphic to the connected sum of three projective planes, we have:

\begin{equation*}
\Delta'= \underbrace{P \# \cdots \#P}_\textrm{$(k+Sq{\Delta_1}+2)$\small{ times}} \# \underbrace{T \# \cdots \# T}_\textrm{$(t-2)$ times}.
\end{equation*}

Continuing to rewrite in terms of connected sums of projective planes, we obtain:

\begin{equation*}
\Delta'= \underbrace{P \# \cdots \#P}_\textrm{$(k+Sq{\Delta_1}+2(t-1))$\small{ times}}.
\end{equation*}

Hence, the genus of $\Delta'$ is equal to $k+Sq{\Delta_1}+2(t-1)$:

\begin{equation*}
n=k+Sq{\Delta_1}+2(t-1).
\end{equation*}

Solving this for $Sq{\Delta_1}$, we get

\begin{equation*}
Sq{\Delta_1}=  Sq(u_1,\ldots,u_t)=n-k-2(t-1),
\end{equation*}

which proves \eqref{nonorientableCase}.
\end{proof}

Let $G$ be a free product of groups $G_1, G_2,\ldots$. Suppose a standard orientable quadratic equation has a solution $\phi : F(x_i,y_i,z_j)*G\rightarrow G$. Represent the equation in the form such that the product of commutators is equal to the product of conjugates of the coefficients:
\bel{oeq} \left( \prod_{i=1}^g [x_i,y_i]
\right)=\left(\prod_{j=1}^{m-1} z_j^\mo V_j z_j\right) V_m \eee 
The product of commutators is a quadratic word. The solution $\phi $ defines a specialization of this quadratic word  in $G$. This specialization  can be represented as a product of several elements $U_1,\ldots,U_m$, where each  $U_i$ is a conjugate of  $V_i$. Therefore,  the set of elements $U_1,\ldots,U_m$
   can be obtained (altogether) as a specialization $\phi$ of a quadratic set of words.  
\begin{prop}\label{th:6} In the above notation,
elements $V_1,\ldots, V_m$
can be obtained from a multi-form of genus $g$ over $G$ by a permissible substitution.
\end{prop}
\begin{proof}
The orientable quadratic set of words, and the specialization $U_1$,\ldots,$U_m$, can be written on the boundaries of $m$ disks labeled by  elements of the free product. We do not make any cancellations at this point between the specializations of the letters in these words. The disks define a surface of genus $g$ (we can assume that they define one connected surface, otherwise this specialization corresponds to a solution of two disjoint quadratic equations, where the sum of their genera is $g$). The boundaries of disks give a graph $\Gamma$ on the surface, with elements of the free product written on the edges. Each element labeling an edge is in normal form for the free product. We divide the edges of $\Gamma$ with vertices of degree two, according to the normal forms of the words written on the edges. We say that there is a labeling function $\phi$ on the edges of $\Gamma$ by elements of the free product, and $\Gamma$ may be equipped with a quadratic set of circuits $T$ such that the words representing $U_1,\ldots ,U_m$ can be read along $T$.

Let $p$ be a path of length $k$ in $\Gamma$ such that $\phi(p)=1$, and let $v$ and $w$ be its end points. Vertices $v$ and $w$ do not coincide, since otherwise the genus of the equation would be decreased, by the Euler characteristic formula. There is an edge $e_j$ which appears in $p$ exactly once (otherwise the genus of the equation would be smaller, again by the Euler characteristic formula). We identify the vertices $v$ and $w$ of $\Gamma$ and delete $e_j$. Then we delete all the edges which are incident to at least one vertex of degree one.

We say that the new graph $\Gamma^{\prime}$ is obtained from
$\Gamma$ by $\tau_k$-transformation. Since the number of edges in  $\Gamma^{\prime}$ is at least one less than the number of edges in $\Gamma$, if this process is continued then we get a graph $\Gamma^{\prime}$ and quadratic set of circuits $T^{\prime}$. The words $U_1$,...,$U_m$ can be read along the circuits $T^{\prime}$, and  $\Gamma^{\prime}$ does not have any subpath $p$ such that $\phi(p)=1$.

In the next step, we show that  removing all connected components of $\Gamma^{\prime}$ which are labeled by elements of the same free factor,
corresponds to constructing joint extensions of a framing word.

Let $K$ be a connected subgraph of $\prm{\Gamma}$ whose edges are labeled by elements of a free factor
$G_i$. We take  $K$ to be maximal. We call a vertex a {\it boundary } vertex  if it is incident to both
$K$ and $\prm{\Gamma} \setminus K$. We refer to the edges of $K$ as  $K$-edges, and to the edges of $\prm{\Gamma} \setminus K$ as $\Complement{K}$-edges.

Let $w$ be a boundary vertex. Without loss of generality, we assume that all edges incident to $w$ are leaving $w$. Let

\begin{align*}
a^{-1}_{1,1} a_{1,2}, \dotsc, a^{-1}_{1,t_{1}-1} a_{1,t_{1}},a^{-1}_{1,t_{1}}  b_{1,1}, b^{-1}_{1,1} b_{1,2},
&\dotsc, b^{-1}_{1,s_1} a_{2,1}, \\
& \dotsc,  a^{-1}_{2,t_{2}} b_{2,1} ,\dotsc,  a^{-1}_{r,t_{r}} b_{r,1}, \dotsc,  b^{-1}_{r,s_{r}} a_{1,1}
\end{align*}

\noindent be a girth of $w$, where $b_{i,j}$ edges are $K$-edges and $a_{i,j}$ edges are $\Complement{K}$ edges. We refer to the sets of $K$-edges ($\Complement{K}$-edges ) not separated by $\Complement{K}$-edges ($K$-edges)
as $K$-bundles ($\Complement{K}$-bundles) (see Figure \ref{bundle}).

\begin{figure}[ht!]
\centering
\includegraphics[width=110mm]{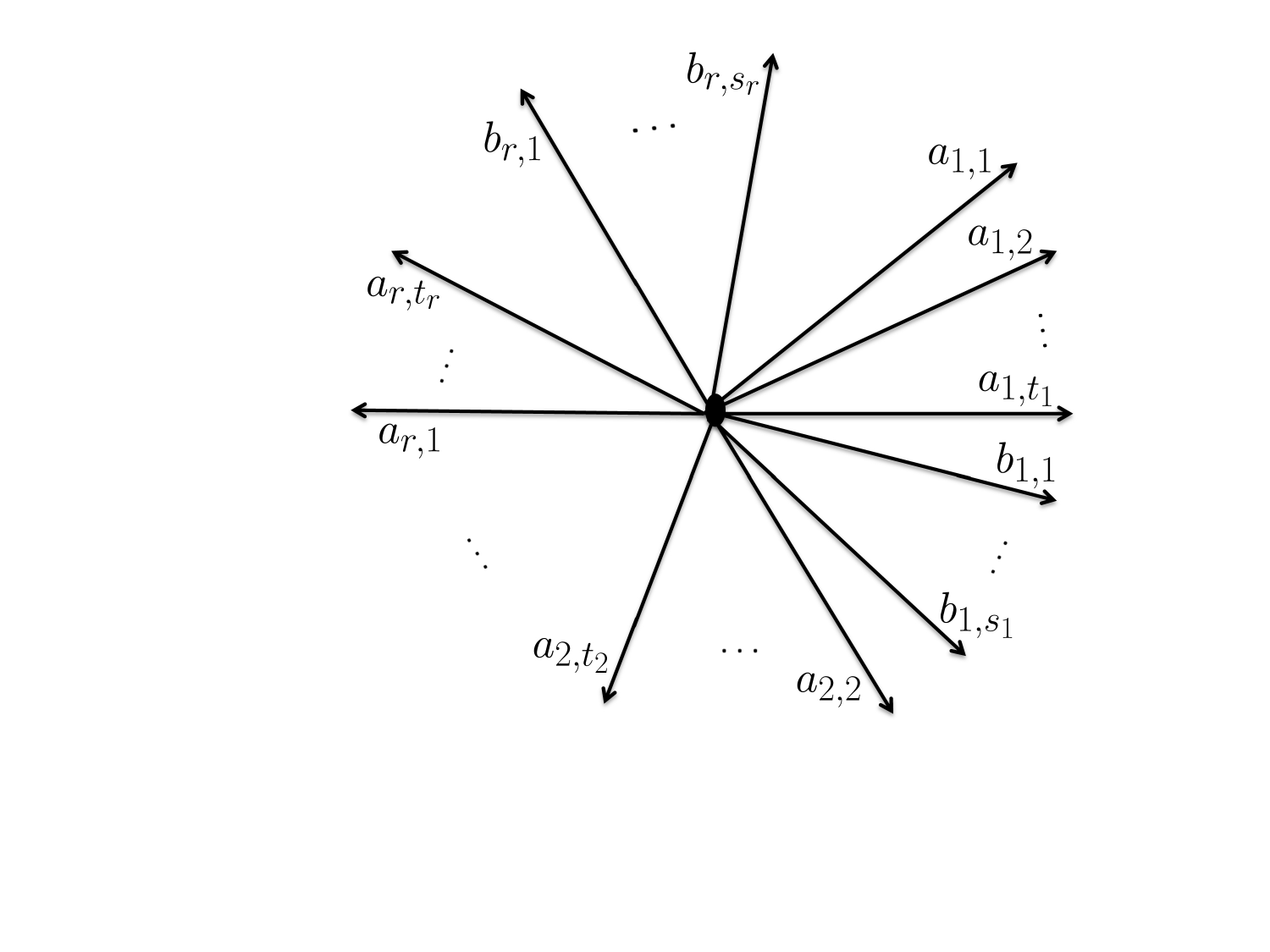}
\caption{ }
\label{bundle}
\end{figure}

We replace the vertex $w$ by a sequence of vertices $w_{1,1}, w_{1,2},\ldots,  w_{r,1}, w_{r,2}$, and insert an edge $e_i$ between $w_{i,1}$ and $w_{i,2}$ pointing towards $w_{i,2}$ and
an edge $\prm{e_i}$ between $w_{i,2}$ and $w_{i+1,1}$ pointing towards $w_{i,2}$. Each $\Complement{K}$-bundle $a_{i,1},\ldots,a_{i,t_{i}}$ is incident to $w_{i,1}$, and
each $K$-bundle  $b_{i,1},\ldots,b_{i,s_{i}}$ is incident to $w_{i,2}$. We let $\phi(e_i)=\phi(\prm{e_i})=1$ and consider them as $\Complement{K}$-edges. We call $w_{i,1}$ a boundary $\Complement{K}$-vertex (see Figure \ref{separation}). If we do this for all boundary vertices, we get a new graph
 $\Gamma''$ with a quadratic set of circuits $T''$ such that the words $U_1,\ldots,U_m$  can be read along $T''$.
A subpath of $T''$ consisting of only $K$($\Complement{K}$)-edges is called $K$($\Complement{K})$-subpath.

\begin{figure}[ht!]
\centering
\includegraphics[width=110mm]{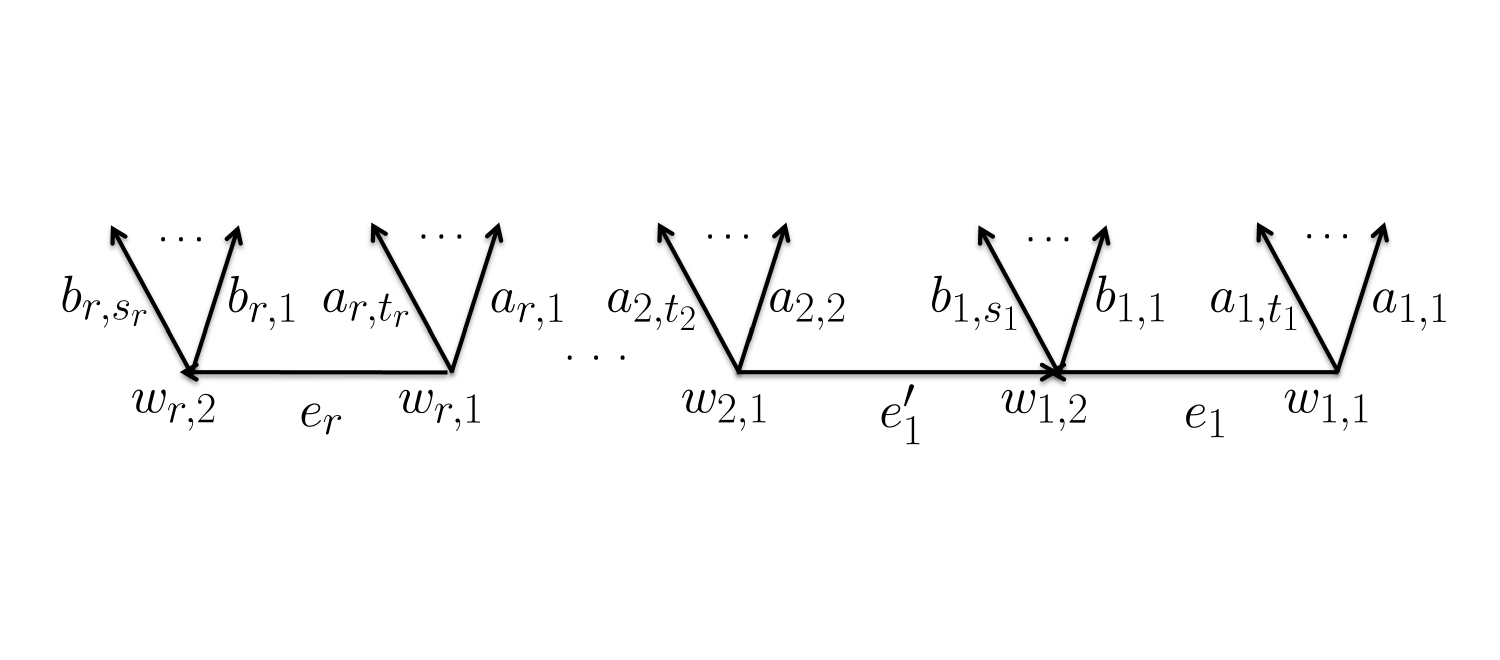}
\caption{ }
\label{separation}
\end{figure}

Let $v_1=w_{1,1}$ be a boundary $\Complement{K}$-vertex with $a_{1,1},\dotsc,a_{1,t_{1}}$ and $e_1$ leaving $v_1$. Then the girth of $v_1$ is

\begin{equation*}
e_{1}^{-1} a_{1,1}, a^{-1}_{1,1} a_{1,2}, \dotsc, a^{-1}_{1,t_{1}-1} a_{1,t_{1}}, a^{-1}_{1,t_{1}}e_1.
\end{equation*}

Let $B_1$ be the $K$-subpath which is traversed after $e_1$ in $T''$. Let $e_{2}^{-1}$ be the first edge
which is taken after this $K$-path by $T''$, and $v_2$ be the initial vertex of $e_2$. Hence, we have
the following sequence of subpaths in $T''$:

\begin{align*}
e_{1}^{-1} a_{1,1}, a^{-1}_{1,1} a_{1,2}, \dotsc, a^{-1}_{1,t_{1}-1} a_{1,t_{1}}, a^{-1}_{1,t_{1}}e_1
& B_1e_{2}^{-1} a_{2,1}, a^{-1}_{2,1} a_{2,2}, \\
& \dotsc, a^{-1}_{2,t_{2}-1} a_{2,t_{2}}, a^{-1}_{2,t_{2}}e_2.
\end{align*}.

A subpath $B_2$ follows $e_2$ in $T''$, and so on. Since $T''$ is finite,
 at some point it comes back to $v_1$. So there exists a sequence of subpaths of $T''$ of the following form:

\begin{equation}\label{chainExample}
\begin{split}
a^{-1}_{1,1} a_{1,2}, \dotsc, a^{-1}_{1,t_{1}}e_1B_1e_{2}^{-1} a_{2,1},
& \dotsc, a^{-1}_{2,t_{2}}e_2B_2e_{3}^{-1} a_{3,1},\\
& \dotsc, e_{m}^{-1} a_{m,1},\dotsc,
 a^{-1}_{m,t_{m}}e_mB_m e_{1}^{-1}  a_{1,1} .
 \end{split}
\end{equation}

We will call such a sequence a \emph{chain}. Each vertex is included in some chain, and boundary $\Complement{K}$-vertices can be partitioned into disjoint classes according to the chain in which they appear. Let $t$ be the number of these classes.

If we write $T''$ along the boundary of several disks, and identify
 $\Complement{K}$-edges and edges $e_{ij}$
according to their labels, we get a surface $\Delta$ with $t$ boundary components $\Delta_1,\dotsc,\Delta_t$. The labels of these boundary components are $K$-paths. We denote these labels by $u_i$. For example, if we consider
 the chain \eqref{chainExample} mentioned above, the cyclic word $u=B_1 \cdots B_m$ will be written
around one of these boundary components.  Let the genus of $\Delta$ be $k$.

We glue a disk $D_i$ to each boundary component $\Delta_i$ by identifying the boundary of $D_i$ with the boundary of $\Delta_i$. Then we shrink each $D_i$ to a vertex $v_i$. Let $\Delta'''$ be the surface that we get after this operation. Then $\Delta'''$ has a graph $\Gamma'''$ and a family of  circuits $T'''$ such that the words $U'''_1,\dotsc,U'''_m$  can be read along $T'''$.  It is clear that if we cancel all $K$-paths in $T''$, we get $T'''$. Words representing the same elements as $U_1,\dotsc ,U_m$ can be obtained from $U'''_1,\dotsc ,U'''_m$  by constructing a joint extension on $v_1, \dotsc, v_t$ by $u_1, \dotsc ,u_t \in K$.

If we identify all $K$-edges in $\Delta$ according to their labels, we get a closed orientable surface $\Delta'$ and an associated graph with circuits, along which we can read  words representing the same elements as $U_1,\dotsc ,U_m$. Let the genus of $\Delta'$ be $n$. By Lemma \ref{genuslemma} we get that 

\begin{equation*}
Genus(u_1, \dotsc, u_t)=n-t-k+1.
\end{equation*}

Hence, a joint extension of genus $Genus(u_1, \dotsc, u_t)+t+1$ is constructed on $v_1, \dotsc, v_t$ by $u_1, \dotsc ,u_t \in K$. The resulting multi-form has genus $n$ (see Definition \ref{multiform}).

Now, we go back to $\Gamma'$. Let $K_1, \dotsc, K_m$ be connected subgraphs of $\Gamma'$ such that each $K_i$ is labeled by elements of some free factor and has no less than two edges. 
We assume that if $K_i$ and $K_j$ are labeled by elements of the same free factor they do not share any edges, nor any vertices. Let $\bar U_1,\cdots ,\bar U_m$ be normal forms of elements $U_1,\dotsc,U_m$. By applying the above process to each $K_i$, it follows that $\bar U_1,\dotsc,\bar U_m$  can be obtained from a multi-form of genus $g$ over the free product by a permissible substitution. This proves the proposition. 

\end{proof}

{\bf Proof of Theorem \ref{th:5}.}

Proposition \ref{th:6} deals with an orientable equation and an orientable set of words. All other cases, where either the equation or the set $(U_1, \dotsc, U_t )$ (or both) are non-orientable, can be proved similarly.  By Proposition \ref{th:6}, it's possible to obtain $V_1,\ldots ,V_m$, in a free product of groups $G$, by a permissible substitution from a multi-form  $\mathcal{A}$ of genus $g$ over $G$. Using this multi-form we are going to construct a solution of the original equation and estimate its length.
 Eventually, after some transformations, we will obtain this solution of the original equation as a solution of a quadratic equation in a free group generated by the union of the generating sets of the free factors.

Let $\Gamma$
be the associated graph. By definition of the multi-form $\mathcal{A}$, it is obtained by separation of all vertices of $\Gamma$ into $p+1$ disjoint sets $B_0,B_1,\dotsc,B_p$, and genus $g_i$ extensions on the vertices of $B_i$, for
$i=1,\dotsc,p$. Let each set $B_i$ have $l_i$ vertices.	

This means that each of the $l_i$ vertices is extended by some word $W(i,s)$, where $s=1,\dotsc,l_i$. To proceed we need the following definition.

\begin{definition}

Let the vertex $v$ be extended by a cyclic word $W$ of some group $H$,
 $\psi_1\dotsb\psi_l=W$ in $H$, $\psi_i \in H$, such that
 $a_1^{-1} \psi_1 a_2,\dotsc,a_{l-1}^{-1} \psi_{l-1} a_l,a_l^{-1} \psi_l a_1$.
We will say that the extended vertex is \emph{augmented} if the words $$a_1^{-1} \psi_1 a_2,\dotsc,a_{l-1}^{-1} \psi_{l-1} a_l,a_l^{-1} \psi_l a_1$$
are replaced by $$A_1^{-1}A_2,\dotsc,A_{l-1}^{-1}A_l,A_l^{-1}WA_1$$
where $A_1=a_1$ and $A_i=\psi_1\psi_2\dotsb\psi_{i-1}a_i$ for $i=2,\dotsc,n-1$.

\end{definition}

\begin{example}
Let $v$ be the vertex extended by a cyclic word
$W$ of a group $H$, $\psi_1\psi_2\psi_3\psi_4=W$ in $H$,$\psi_i \in H$, and
 $a_1^{-1} \psi_1 a_2,a_2^{-1}\psi_2 a_3,a_3^{-1} \psi_3 a_4,a_4^{-1} \psi_4 a_1$.

Augmentation of $v$:

$a_1^{-1} \psi_1 a_2$,

$a_2^{-1}\psi_1^{-1}\psi_1\psi_2 a_3$,

$a_{3}^{-1}\psi_2^{-1}\psi_1^{-1}\psi_1\psi_2 \psi_3 a_4$,

$a_4^{-1} \psi_3^{-1}\psi_2^{-1}\psi_1^{-1}\psi_1\psi_2\psi_3\psi_4 a_1$.

Now we can replace $a_1$ by $A_1$, $\psi_1a_2$ by $A_2$, $\psi_1\psi_2a_3$ by $A_3$,
 $\psi_1\psi_2\psi_3a_4$ by $A_4$ and the product $\psi_1\psi_2\psi_3\psi_4$ by $W$.
\end{example}

Denote the length of the original equation by $M=n(Q)+c(Q)$. If we perform augmentation of every extended vertex of
 $ \mathcal{A}$, then by Proposition 3, the length of every {\it new} letter is bounded from above by $M$, and
the result of augmentation of every extended vertex will look
like the framing set of words, plus the words $W(i,s)$ which
were used for the joint extensions. By Definition 7 (of a joint extension) and Definition 8 (of a multi-form
for a free product), for a fixed $i$, the words $W(i,s)$
satisfy one of the following equalities, for some $\{b_k,c_k,d_j\}$ in a free factor  $G_i$:

\bel{eqn:orientable} \left( \prod_{k=1}^{g_i} [b_k,c_k]
\right)\left(\prod_{j=1}^{l_{i-1}} d_j^\mo W(i,j) d_j\right) W(i,l_i) = 1, \eee   or

\bel{eqn:non-orientable}
 \left(\prod_{k=1}^{g_i} b_k^2\right)
\left(\prod_{j=1}^{l_i-1} d_j^\mo W(i,j) d_j\right) W(i,l_i) = 1. \eee 
Now express $W(i,l_i)$ in terms of commutators, squares and conjugates of $W(i,j), \ j=1,\dotsc, l_{i-1}$, using the equations above, and substitute it into the augumented $ \mathcal{A}$.

Consider the case when all free factors are either free groups,
or abelian groups. Example 4 illustrates the proof. We denote by $M_i$ the length of the $i'th$ extension, $M_i=\sum_{s=1}^{l_i}|W(i,s)|$. The sum  of all $M_i$'s is bounded by $M$.

If $G_i$ is a free group,  then for minimal such $b_k,c_k,d_j$, the length of each of the  $b_k,c_k,d_j$ above is bounded by $2M_i$ in the orientable case, and by  $12 M_i^4$ in the non-orientable case (see \cite{kv}, \cite{lm}).  If $G_i$ is a free abelian group then the first equality becomes  $(\prod_{j=1}^{l_i-1}  W(i,j) )W(i,l_i) = 1,$ and the second equality becomes
$b^2(\prod_{j=1}^{l_i-1}  W(i,j) )W(i,l_i) = 1.$

Let  $V_1^{\prime},\ldots , V_m^{\prime}$ be a set of words
after augmentation and substitution of $W(i,l_i)$ in terms of commutators, squares and conjugates. The set of words  $V_1^{\prime},\ldots , V_m^{\prime}$  is presented by a quadratic set of words in a free group generated by the union of the generating sets of the free factors. Notice that the words $V_1^{\prime},\ldots , V_m^{\prime}$  may be non-reduced.

 The number of  letters after augmentations is bounded by $M$, and substitution of $W(i,l_i)$ in terms of commutators and squares does not give more then $4M$ new letters, so the new total length is not more then $5M$. In a free group, the number of commutators is bounded by the total length of coefficients, and products of commutators in abelian groups are trivial. So the length of each new letter is bounded by $2M$, and then the total length of  $V_1^{\prime},\ldots ,V_m^{\prime}$ is bounded by $10M^2$, in the orientable case. A similar argument shows that in the non-orientable case the number of new letters is bounded by $5M$, but the length of each new letter is bounded by $12M^4$, so therefore
the total length of  $V_1^{\prime},\ldots ,V_m^{\prime}$ is bounded by $60M^5$. For the quadratic equation $Q=1$ in the formulation of the theorem, the tuple $V_1^{\prime},\ldots , V_m^{\prime}$  represents the coefficients. Now we can use the results from \cite{lm}, cited above, for arbitrary quadratic equations in free groups in order to obtain the estimates in Theorem \ref{th:5}. Theorem \ref{th:5} is proved.
 
If the equation $Q=1$ is in standard form, then the estimates are even better: $|\alpha (x)|< N((n(Q)+c(Q))^2$  for an orientable equation, and $|\alpha (x)|< Nn(Q)((n(Q)+c(Q))^5$ for a non-orientable equation.

\begin{example}
Let a solution of a quadratic equation be obtained from the non-orientable multi-form of Example 2 by a permissible substitution:
$$V_1=A\xi_1ED\psi_1C^{-1}B\xi_3A^{-1}FB\xi_2EG_1G_2H_1O_2O_1^{-1}I_1,$$
$$V_2=C\psi_2H_2O_2ZG_2I_2I_1F,$$
$$V_3=D\psi_3H_2H_1^{-1}I_2O_1ZG_1,$$
where $\xi_1\xi_2^{-1}\xi_3=U_1$,$\psi_1\psi_2\psi_3^{-1}=U_2$,
$(U_1,U_2)$ has orientable genus 3 in some free factor $G_i$, and $|V_1|+|V_2|+|V_3|=s$.

Now we  have to bring the multi-form to a quadratic set in a free group.
First of all we perform augmentations, and our multi-form is as
follows:

$$V_1^{\prime}=AE_1DC_1^{-1}B_1U_1A^{-1}FB_1E_1G_1G_2H_1O_2O_1^{-1}I_1,$$
$$V_2^{\prime}=C_1H_3O_2ZG_2I_2I_1F,$$
$$V_3^{\prime}=DU_2^{-1}H_3H_1^{-1}I_2O_1ZG_1,$$
where
$E_1=\xi_1E,B_1=B\xi_2\xi_1^{-1},C_1=C\psi_1^{-1}$, and $H_3=\psi_1\psi_2H_2$. Without loss of generality, we may assume that $U_2=UU_1^{-1}$, where $U=[b_1,c_1][b_2,c_2][b_3,c_3]$. Substituting $U_2$ by $UU_1^{-1}$, we get
a quadratic set of non-orientable genus $13$ for the free group.
The length of every letter in $V_1^{\prime}$, $V_2^{\prime}$, $V_3^{\prime}$
is bounded by $s$, and every letter in $U$ is bounded by $2|U_1|+2|U_2|\leq 2s$.
Using the results for the free group, we get that the length of the
solution is bounded by a polynomial of degree $8$ in $s$.

\end{example}

\section{Toral relatively hyperbolic groups}
We will use the following definition of relative hyperbolicity. A finitely generated group $G$ with  generating set $A$ is relatively hyperbolic relative to a collection of finitely generated subgroups $\mathcal  H=\{H_1,\ldots ,H_k\}$ if the Cayley graph $C(G, A\cup \Pi)$  (where  $\Pi$ is the set of all non-trivial elements of subgroups in $\mathcal H$) is a hyperbolic metric space, and the pair $\{G,\mathcal H\}$ has \emph{Bounded Coset Penetration} property (BCP property for short). 
 The pair $(G,\{H_1,H_2,...,H_k\})$ satisfies the \emph{BCP property}, if   for any $\lambda \geq 1,$
there exists constant $a = a(\lambda)$ such that the following conditions hold. Let $p, q$ be $(\lambda, 0)$-quasi-geodesics without backtracking in $C(G, A\cup \Pi)$  (do not have a subpath that joins a vertex in a left coset of some $H_k$ to a vertex in the same coset (and is not in $H_k$)) such that their initial points coincide ($p_- = q_-$), and for the terminal points $p_+,q_+$ we have  $d_{A}(p_+,q_+) \leq 1.$
 
1) Suppose that for some $ i$, $s$ is a $H_i$-component of $p$ such that $d_A(s_-,s_+) \geq a;$ then there exists a $H_i$-component $t$ of $q$ such that $t$ is connected to $s$ (there exists a path $c$ in $C(G, A\cup \Pi)$ that connects some vertex of $p$ to some vertex of $q$ and the label of this path is a word consisting of letters from $H_i$).
 
2) Suppose that for some $i,$ $s$ and $t$ are connected $H_i$-components of $p$ and $q$ respectively. Then $d_A(s_-,t_-) \leq a$ and $d_A(s_+,t_+) \leq a.$

Recall that a group $G$ that is hyperbolic relative to a collection $\{H_{1},\ldots,H_{k}\}$ of subgroups is called {toral} if $H_{1},\ldots,H_{k}$ are all abelian and $G$ is
torsion-free.

In this section we will prove Theorem \ref{th:2}. Notice that we will use  that the word problem and the conjugacy problem
 in
(toral) relatively hyperbolic groups are decidable.
\begin{prop}\label{Lem:RipsSela2}
Let $\Gamma=\GammaPresentation$ be a
total relatively hyperbolic group and with parabolic subgroups $H_1,\ldots ,H_k$  and $\pi : F(A)\ast H_1\ast\ldots\ast H_k\rightarrow \Gamma$ the canonical epimorphism.  There
is an algorithm that, given a system $S(Z,A)=1$
of equations over $\Gamma$, produces finitely many systems of
equations
\begin{equation}
S_{1} (X_{1},A)=1,\ldots,S_{n}(X_{n},A)=1
\end{equation}
over a free product $P=F\ast H_1\ast\ldots\ast H_k$ and homomorphisms $\rho_{i}: F(Z)\ast P\rightarrow P_{S_{i}}=(F(Z)\ast P)/ncl S_i$ for $i=1,\ldots,n$
such that
\begin{romanenumerate}
\item for every $P$-homomorphism $\phi : P_{S_{i}}\rightarrow P$,  the map $\overline{\rho_{i}\phi\pi}:\Gamma_{S}\rightarrow \Gamma$ is a $\Gamma$-homomorphism, and
\item for every $\Gamma$-homomorphism $\psi: \Gamma_{S}\rightarrow \Gamma$ there is an integer $i$ and an $F$-homomorphism
$\phi : P_{S_{i}}\rightarrow P$ such that $\overline{\rho_{i}\phi\pi}=\psi$.
\end{romanenumerate}
Further, if $S(Z)=1$ is a system without coefficients, the above holds with $G=\GPresentation$ in place of $\Gamma_{S}$ and `homomorphism' in place of
`$\Gamma$-homomorphism'.

Moreover, $|S_i|=O(|S|^2)$ and $|X_i|=O(|S|)$ for each $i=1,\ldots, n.$
\end{prop}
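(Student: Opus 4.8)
The plan is to run the proof of Proposition~\ref{Lem:RipsSela1} almost verbatim, replacing the Rips--Sela canonical representatives for hyperbolic groups by their analogue for toral relatively hyperbolic groups --- a construction of Dahmani which assigns to each $g\in\Gamma$ a word $\theta_m(g)$ over $P=F\ast H_1\ast\cdots\ast H_k$ with $\theta_m(g)=g$ in $\Gamma$, enjoying the same triangle property as in \cite{RS95}, but with the correction terms now living in $P$ and measured in the relative metric of $C(\Gamma,A\cup\Pi)$. First I would triangulate the system: Lemma~\ref{interLem1}, whose proof uses nothing about $\Gamma$, lets me assume $S(Z,A)$ consists of triangular equations $z_{\sigma(j,1)}z_{\sigma(j,2)}z_{\sigma(j,3)}=1$ together with constant equations $z_s=a_s$, at the cost of passing from $|S|$ to $O(|S|^2)$ and keeping quadraticity.

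Fixing a solution $\psi$ with $\psi(z_{\sigma(j,k)})=g_{\sigma(j,k)}$, the relatively hyperbolic canonical representative theorem should supply a \emph{computable} constant $L$ (depending on the hyperbolicity constant of $C(\Gamma,A\cup\Pi)$, the BCP constant, and the number of equations), an integer $m\le L$, elements $h_k^{(j)}\in P$, and correction terms $c_k^{(j)}\in P$ of size bounded by $L$ such that $c_1^{(j)}c_2^{(j)}c_3^{(j)}=1$ in $\Gamma$ and $\theta_m(g_{\sigma(j,1)})=h_1^{(j)}c_1^{(j)}(h_2^{(j)})^{-1}$ (and cyclically); so whenever $\sigma(j,k)=\sigma(j',k')$ one gets $h_k^{(j)}c_k^{(j)}(h_{k+1}^{(j)})^{-1}=h_{k'}^{(j')}c_{k'}^{(j')}(h_{k'+1}^{(j')})^{-1}$ in $P$. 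Exactly as in Proposition~\ref{Lem:RipsSela1}, I would then produce one system $S_i$ over $P$ for every $m\le L$ and every admissible tuple of correction terms --- those of size at most $L$ whose relevant products are trivial in $\Gamma$, a condition checkable because the word problem in toral relatively hyperbolic groups is decidable --- consisting of the equations \eqref{Eqn:SC1} and \eqref{Eqn:SC2} with $\theta_m(a_s)$ a fixed quasigeodesic representative of $a_s$ in $C(\Gamma,A\cup\Pi)$, and define $\rho_i$ by the analogous formula, now taking values in $P_{S_i}$.

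Verifying (i) and (ii) is then identical to the hyperbolic case. For (ii): given a $\Gamma$-homomorphism $\psi:\Gamma_S\to\Gamma$, choose $i$ so that the data $(m,c_k^{(j)})$ above occur in the construction and set $\phi(x_k^{(j)})=h_k^{(j)}$; the triangle equations show $\phi$ solves $S_i=1$ and that $\overline{\rho_i\phi\pi}=\psi$. For (i): for any $P$-solution $\phi$ of $S_i=1$, the word $z_{\sigma(j,1)}z_{\sigma(j,2)}z_{\sigma(j,3)}$ maps under $\rho_i\phi$ to $h_1^{(j)}c_1^{(j)}c_2^{(j)}c_3^{(j)}(h_1^{(j)})^{-1}$, which is trivial in $\Gamma$ since $c_1^{(j)}c_2^{(j)}c_3^{(j)}=1$ there, so $\overline{\rho_i\phi\pi}$ is a well-defined $\Gamma$-homomorphism. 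The length estimate is the analogue of Lemma~\ref{interLem2}: once $S$ is triangular, $S_i$ has at most $|S|$ equations, each either of type \eqref{Eqn:SC1} of bounded size or of type \eqref{Eqn:SC2} with right-hand side $\theta_m(a_s)$ of length $O(|S|)$, so $|S_i|=O(|S|^2)$ for triangular $S$ and $|S_i|=O(|S|^4)$ in general; quadraticity survives by the same counting as in Lemma~\ref{interLem2}, since in a quadratic $S$ each pair $(j,k)$ is matched with at most one $(j',k')$.

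The main obstacle is entirely geometric: one must establish that the Rips--Sela construction of canonical representatives, together with the bound on the correction terms, genuinely carries over to toral relatively hyperbolic groups with \emph{computable} constants, and --- the key point for finiteness of the family $\{S_i\}$ --- that only finitely many admissible correction tuples need be enumerated. In the hyperbolic case this is automatic, since the $c_k^{(j)}$ are short words in the finite alphabet $A$; in the relative case the correction terms may contain parabolic syllables ranging a priori over the infinite subgroups $H_i$, and one needs the BCP property (and torsion-freeness) to confine the parabolic content of each $c_k^{(j)}$ to a computable finite subset of $P$. This is also the step that forces the use of the conjugacy problem in $\Gamma$, in order to recognise when two such parabolic corrections represent the same element. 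Once this geometric input is in place, the rest is the bookkeeping sketched above.
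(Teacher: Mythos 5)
Your proposal follows exactly the paper's route: the paper's entire proof is the remark that one repeats the argument of Proposition~\ref{Lem:RipsSela1} verbatim, replacing the Rips--Sela canonical representatives by those of Theorem~3.3 of \cite{Dah} for (toral) relatively hyperbolic groups. The ``main geometric obstacle'' you correctly isolate --- computable constants and finiteness of the admissible correction tuples despite the infinite parabolic subgroups --- is precisely what the paper outsources to Dahmani's theorem, so your sketch is a faithful (and more explicit) version of the intended argument.
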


The proof is the same as the proof of Proposition \ref{Lem:RipsSela1}, but instead of
\cite{RS95} one has to use Theorem 3.3 in \cite{Dah} about representatives in a free product of free abelian groups of finite rank. 

Now the proof of Theorem \ref{th:2} is almost identical to the proof of Theorem \ref{th:1}, but instead of the results about length estimates of  a minimal solution of a quadratic system of equations in a free group one should use Theorem \ref{th:5}.

\section{A family of equations over $\Gamma$, for which the Diophantine problem is NP-hard}
The Diophantine problem for a system of equations $S=1$ over a (class of) group(s) $G$, is to determine whether $S=1$ has a solution in $G$. In this section we will complete the proof of Theorem \ref{NPhard} by showing that, for any input of the exact bin packing problem, there is a corresponding quadratic equation $S=1$ over a torsion-free hyperbolic group $\Gamma$, such that a solution to $S=1$ gives a positive answer to the given input, and vice versa. The exact bin packing problem, which is NP-hard (see \cite{KLMT} where the bin packing problem from \cite{Garey}, p. 226, is modified into the exact bin packing problem), is given by:

{\bf Problem} {Exact Bin Packing}
\begin{itemize}
\item INPUT: An s-tuple of positive integers $(r_{1},...r_{s})$ and
positive integers $B$ and $N$.
\item QUESTION: Is there a partition $\{1,\ldots,s\}  =B_{1}\sqcup\cdots\sqcup B_{N}$, such that for each $i=1,\ldots,N$ $$\sum_{j\in B_{i}} r_{j}=B?$$
\end{itemize}

Let $\Gamma=\langle A|R\rangle$ be a non-elementary torsion-free $\delta$-hyperbolic group, and $Cay(\Gamma)$ be the Cayley graph of $\Gamma$ with respect to $A$.  By \cite{Arzh}, $\Gamma$ contains a convex free subgroup $F(b,c)$ of rank two. We can assume that $b$ and $c$ are both cyclically reduced as elements of $\Gamma$, i.e. have minimal length in their respective conjugacy classes, and furthermore that $g^{-1}b^ng\neq c^m$ for all $g\in\Gamma, m,n\in\mathbb{Z}$ (\cite{[BH09]}). Denote fixed minimal words in $A^{\pm1}$ representing these elements by $b$ and $c$ as well. By Theorem I.1.4 of \cite{Ch12} (and the more general Theorem 2.14 of \cite{DGO} given in terms of rotating families), there
exists an integer $D$ such that the normal closure $\langle\langle b^{s_{1} D},c^{s_{2}D}\rangle\rangle$ in $\Gamma$ is free for any $s_{i}>0$.

Given a positive integer $n$, and an $n+2$-tuple $\zeta=(d,\kappa,t_1,\ldots,t_n)$ of positive integers, let $a_{\zeta}=b^{\kappa D}c^{dt_1D}b^{\kappa D}\ldots c^{dt_nD}b^{\kappa D}$. For each bin packing input, we will consider certain systems of equations of a particular form, depending on $\zeta$. Given a bin packing input $(r_1,\ldots,r_s,B,N)$, for each $n$ and $n+2$-tuple $\zeta$, let $S[r_1,\ldots,r_s,B,N,\zeta]=1$ (or just $S[\zeta]=1$) be the equation \begin{equation}\label{*}\prod_{j=1}^sz_j^{-1}[a_{\zeta},b^{dDr_j}]z_j=[a_{\zeta}^{N},b^{dDB}].\end{equation}

Later, we will give explicit conditions on $\zeta$ so that the existence of a solution in $\Gamma$ of $S[\zeta]=1$ implies a solution to the given bin packing input. $|S[\zeta]|$ will be polynomial in $s$, the size of the bin packing input. It is immediate from van Kampen diagrams that a positive solution to the bin packing input $(r_1,\ldots,r_s,B,N)$ implies the existence of a solution to $S[\zeta]=1$ in $\Gamma$, for any $\zeta$.

By Lemma \ref{interLem1}, $S[\zeta]=1$ may be transformed to a system consisting of $s_1$ triangular and $s+1$ constant equations, where $s_1=O(s)$. As in Proposition \ref{Lem:RipsSela1} and Proposition \ref{Bnd}, by then considering canonical representatives $\theta_m,m\leq L=(s_1+s+1)\cdot 2^{5050(\delta+1)^{6}(2|A|)^{2\delta}}$ and all choices of $c_1^{(\ell)},c_2^{(\ell)},c_3^{(\ell)}\in B_{\Gamma}(L)$ (the ball of radius $L$ in $\Gamma$) for which $c_1^{(\ell)}c_2^{(\ell)}c_3^{(\ell)}=1$, a finite number of quadratic systems of equations $S_i[\zeta]=1$, $i=1,\ldots,m_1$ may be constructed. A solution in $\Gamma$ of $S[\zeta]=1$ implies a solution in $F(A)$ of $S_i[\zeta]=1$ for some $i$. Each $S_i[\zeta]=1$ is in variables $X_0=\{x_1^{(1)},x_2^{(1)},x_3^{(1)},\ldots,x_1^{(s_1)},x_2^{(s_1)},x_3^{(s_1)}\}$ with equations of the form:
 \begin{equation}\label{Si}
 \begin{split}
 x_k^{(\ell)}c_k^{(\ell)}(x_{k+1}^{(\ell)})^{-1}&=x_{k'}^{(\ell')}c_{k'}^{(\ell')}(x_{k'+1}^{(\ell')})^{-1}
 \\
(x_{k''}^{(\ell'')})c_{k}^{(\ell'')}(x_{k''+1}^{(\ell'')})^{-1}&=\theta_m([a_{\zeta},b^{dDr_j}])
\\
(x_{k'''}^{(\ell''')})c_{k'''}^{(\ell''')}(x_{k'''+1}^{(\ell''')})^{-1}&=\theta_m([a_{\zeta}^N,b^{dDB}])
\end{split}
\end{equation}
where $k+1$ is taken cyclically with respect to $(1,2,3)$. Note that there are exactly $\frac{1}{2}(3s_1-(s+1))$ many equations of the first type, and $s+1+3s_1$ many coefficients in each $S_i[\zeta]=1$.

If $\zeta$ is changed, only the coefficients of $S[\zeta]$ may differ, so all $S[\zeta]=1$ have the same form of decomposition into triangular equations. Therefore only the coefficients may differ in each $S_i[\zeta]=1$ for $1\leq i \leq m_1$ and any $\zeta$.
\subsection{Entire Transformations}
It's possible to construct a finite number of systems, where every equation in each system includes the canonical representative of exactly one original coefficient from $S[\zeta]=1$, such that the solutions of $S_i[\zeta]=1$ in $F(A)$ factor through the solutions of these systems in an appropriate sense.

\begin{prop}\label{EnTrans}
For each $1\leq i\leq m_1$ and any $\zeta$, if the system $S_i[\zeta]=1$ has a solution in $F(A)$, then there is another quadratic system of equations $\bar E(S_i[\zeta])=1$, in variables $\bar X_i=\{\bar x_1,\ldots,\bar x_{\bar s_i}\}$ , for some $\bar s_i\leq3s_1$, which has a solution $\psi$ in $F(A)$. The equations of  $\bar E(S_i[\zeta])=1$ are given by: 
$$w^{(j)}(\bar X_i)=\theta_m([a_{\zeta},b^{dDr_j}])\text{ for }1\leq j\leq s$$ 
$$w^{(s+1)}(\bar X_i)=\theta_m([a_{\zeta}^N,b^{dDB}])$$
$$w^{(j)}(\bar X_i)=\bar c_{j-s-1}\text{ for }s+2\leq j\leq s+3s_1+1$$

for some $\bar c_1,\ldots,\bar c_{\bar 3s_1}$ in the ball of radius $L$ in $F(A)$. Furthermore, for $\bar X_i^{\psi}=\{\psi(\bar x_1),\ldots,\psi(\bar x_{\bar s_i})\}$, $w^{(j)}(\bar X_i^{\psi})$ is a freely reduced word in $F(A)$ for $j=1,\ldots,s+3s_1+1$.\end{prop}
Note that $\bar E(S_i[\zeta])=1$ has no more variables than $S_i[\zeta]=1$.\begin{proof}This is proven using a rewriting process, described in \cite{Imp}, which is given in terms of generalized equations and combinatorial generalized equations. Since $S_i[\zeta]=1$ is quadratic, the more general process is not needed in full; we describe here the necessary elements for the proof of Proposition \ref{EnTrans}. Note that the notation used here for this simpler version differs slightly from that of \cite{Imp}.
\begin{definition}Given a finite set $A^{\pm1}$, a \emph{combinatorial generalized equation $\tilde\Omega$} consists of the following:
\begin{enumerate}[(i)]
\item A finite set $BS$ of \emph{bases}, which is the disjoint union of \emph{variable bases} $BS_v=\{\mu_1,...,\mu_{2m}\}$ and \emph{constant bases} $BS_c=\{\eta_1,\ldots,\eta_n\}$, for some $m\geq 1,n\geq 0$.

 \item An initial segment $BD=\{1,\dots,\rho_1+1\}$ of $\mathbb{N}$, called the set of \emph{boundaries} of $\tilde\Omega$, where $\rho_1\geq1$. A subset $BD_c=\{\rho_0,\ldots,\rho_1+1\}$ for some $1\leq\rho_0\leq\rho_1+1$ (which may be empty if $n=0$), called the set of \emph{constant boundaries}.

\item A function $\epsilon:BS_v\rightarrow\{-1,1\}$ and an involution $\Delta : BS_v \rightarrow BS_v$. Denote $\Delta(\mu)= \bar\mu$ and call $\mu,\bar\mu$ \emph{dual bases}.

\item Functions $\alpha:BS\rightarrow BD$ and $\beta:BS\rightarrow BD$, where $\alpha(\lambda)<\beta(\lambda)$ for every $\lambda\in BS$, and $\alpha(\eta)\in BD_c$ for every $\eta\in BS_c$ (so $\beta(\eta)\in BD_c$ as well).

\item A map $\sigma:BS_c\rightarrow F(A)$


\end{enumerate}
\end{definition}

 When necessary, denote the set of bases (variable, constant, boundaries, etc.) of $\tilde\Omega$ by $BS(\tilde\Omega)$ (and $BS_v(\tilde\Omega)$, etc.). For any base $\lambda$, boundaries $i$ such that $\alpha(\lambda)<i<\beta(\lambda)$ are called \emph{internal boundaries} of $\lambda$; the \emph{end} boundaries $\alpha(\lambda)$ and $\beta(\lambda)$ can be specified as \emph{initial} and \emph{terminal} boundaries of $\lambda$, respectively. The internal and end boundaries of $\lambda$ are said to be \emph{covered} by $\lambda$. If boundaries $i$ and $i+1$ are covered by $\lambda$, then say the interval $[i,i+1]$ is covered by $\lambda$. If $\alpha(\lambda_1)$ and $\beta(\lambda_1)$ are covered by $\lambda$, say $\lambda_1$ is covered by $\lambda$. Combinatorial generalized equations can be thought of as ``interval diagrams'' (see Figures \ref{fig:genEqn} and \ref{fig:enTrans}).

To each combinatorial generalized equation $\tilde\Omega$, a system of equations\\
$S(x_1,\ldots,x_{\rho_1},A)_{\Omega}=1$ over $F(A)$ is associated, by the following construction:
For each base $\lambda$, let $
   w(\lambda) =(x_{\alpha(\lambda)}\cdots x_{\beta(\lambda)-1})^{\epsilon(\lambda)}$.

\begin{enumerate}[(1)]
\item For each pair of variable bases $\mu,\bar\mu$ form the \emph{basic equation}:
$$w(\mu)=w(\bar\mu)$$

\item For each constant base $\eta$, form the \emph{constant equation}: $$w(\eta)=\sigma(\eta)$$

\end{enumerate}

The variables are referred to as \emph{items}, and $S(x_1,\ldots,x_{\rho_1},A)_{\Omega}=1$ is called the \emph{generalized equation} of $\tilde\Omega$. A generalized equation $S_{\Omega}$ is always assumed to have an associated combinatorial generalized equation $S_{\tilde\Omega}$. Note that a generalized equation is quadratic if and only if each $[i,i+1]$, for $1\leq i\leq \rho_1$, is covered by exactly two bases in $S_{\tilde\Omega}$.

  A \emph{solution} of a generalized equation is a solution $\psi$ of $S_{\Omega}$ in $F(A)$ such that $\psi(w(\lambda))$ is freely reduced in $F(A)$ for each base $\lambda$ (there is no cancellation between each $\psi(x_{i})$ and $\psi(x_{i+1})$ in $\psi(w(\lambda))$). $\psi$ is also considered to be a solution to $S_{\tilde\Omega}$. Following the convention of writing $\psi(x_i)$ as $x_i^{\psi}$, we sometimes denote $\psi(w(\lambda))\in F(A)$ by $\lambda^{\psi}$. Recall that the \emph{length} of a solution $\psi$ is $\sum|\psi|=\sum_{i=1}^{\rho_1}|x_i^{\psi}|$, where $|x_i^{\psi}|$ is the length of the word $x_i^{\psi}$ in $F(A)$. \emph{Minimal} solutions of $S_{\Omega}$ are solutions which are minimal with respect to this length.

\begin{lemma}\label{construct:gen:eqn}
For each $1\leq i\leq m_1$ and any $\zeta$, if $S_i[\zeta]=1$ has a solution $\psi$ in $F(A)$, then there is a generalized equation $S_i[\zeta]_{\Omega}=1$ which has a solution. Furthermore, $\rho_1=3(3s_1-(s+1))+3(s+1)+3s_1$ and the constant bases partition $[\rho_0,\rho_1+1]$ in the corresponding combinatorial generalized equation $S_i[\zeta]_{\tilde\Omega}$. In other words each $[k,k+1]$ is covered by exactly one constant base for $k\geq\rho_0$.
\end{lemma}
\begin{proof}

Given $S_i[\zeta]=1$ with a solution $\psi$ in $F(A)$, consider the $\frac{1}{2}(3s_1-(s+1))$ many equations of the first type described in (\ref{Si}), in some fixed order, followed by the $s+1$ many equations of the second and third type. In each equation, replace the coefficients $c_k^{(\ell)}$ by new variables $y_k^{(\ell)}$ and add the constant equations $y_k^{(\ell)}=c_k^{(\ell)}$ after the equations of the second and third type. 

Now rename every appearance of all variables in this system, in the order of equations, with variables in each equation ordered left to right, introducing additional equations $x_j=x_{j'}$ for each variable appearing twice (i.e. $x_k^{(\ell)}$ is renamed as $x_j$ and $x_{j'}$ in two different equations). So now a generalized equation is obtained, call it $S_i[\zeta]_{\Omega'}$ where $F_{R(S_i[\zeta])}=F_{R(S_i[\zeta]_{\Omega'})}$. $S_i[\zeta]_{\Omega'}$ corresponds to a combinatorial generalized equation $S_i[\zeta]_{\tilde\Omega'}$ pictured in Figure \ref{fig:genEqn}. Note that $\rho_0=3(3s_1-(s+1))-1$, since there are $\frac{1}{2}(3s_1-(s+1))$ many equations without constants, each with $6$ variables. Then there are $s+1$ constant bases, labeled by canonical representatives of commutators, each covering $3$ variable bases. Finally there are $3s_1$ constant bases, labeled by the $c_k^{(\ell)}$, each covering one variable base. So $\rho_1=3(3s_1-(s+1))+3(s+1)+3s_1$. The constant bases partition the interval $[\rho_0,\rho_1+1]$. There are $\frac{1}{2}(3s_1-(s+1))+3s_1+3s_1=\frac{15}{2}s_1-\frac{s}{2}-\frac{1}{2}$ many pairs of dual variable bases.
  
In Figure \ref{fig:genEqn}, the dual bases $\mu_1$ and $\bar\mu_1$ give the basic equation $x_1x_2x_3=x_4x_5x_6$, corresponding to the first equation $x_k^{(\ell)}c_k^{(\ell)}(x_{k+1}^{(1)})^{-1}=x_{k'}^{(\ell')}y_{k'}^{(\ell')}(x_{k'+1}^{(\ell')})^{-1}$ of $S_i[\zeta]=1$. The appearance of $x_k^{(\ell)}$ in another equation (in this example another equation of the first type) is represented by the dual bases $\mu_2$ and $\bar\mu_2$. The constant base $\eta_{s+2}$ is labeled by $\sigma(\eta_{s+2})=c_k^{(\ell)}$ and the dual bases $\mu_3$ and $\bar\mu_3$ give the constant equation $x_2=c_k^{(\ell)}$. Finally, the constant base $\eta_1$ is labeled by $\sigma(\eta_1)=\theta_m([a_{\zeta},b^{dDr_1}])$, and the dual bases $\mu_4$ and $\bar\mu_4$ represent appearances of some variable $x_{k''}^{(\ell'')}$ in an equation of the first type and in an equation of the second type.

Now the generalized equation $S_i[\zeta]_{\Omega'}$ might not have a solution. This occurs if, for $x_j^{\psi}x_{j+1}^{\psi}x_{j+2}^{\psi}$ on one side of some equation, there is cancellation between $x_j^{\psi}$ or $x_{j+2}^{\psi}$, and $x_{j+1}^{\psi}=c_k^{(\ell)}$. However, there is a generalized equation $S_i[\zeta]_{\Omega}$ of exactly the same form, except that $\sigma(\eta_{s+2})=\bar c_1^{(1)},\ldots, \sigma(\eta_{s+1+3s_1})=\bar c_3^{(s_1)}$ for some subwords $\bar c_k^{(\ell)}$ of $c_k^{(\ell)}$, which does a solution.

\end{proof}

\begin{figure}[ht!]
\labellist
\small\hair 2pt
\pinlabel $1$ at 5 60
\pinlabel $2$ at 19 60
\pinlabel $3$ at 33 60
\pinlabel $\ldots$ at 47 60

\pinlabel $\rho_0$ at 189 60

\pinlabel $\rho_1$ at 342 60

\pinlabel $\mu_1$ at 26 47
\pinlabel $\bar\mu_1$ at 68 47

\pinlabel $\mu_2$ at 12 25
\pinlabel $\bar \mu_2$ at 154 25

\pinlabel $\eta_1$ at 210 47
\pinlabel $\mu_3$ at 26 25
\pinlabel $\bar\mu_3$ at 308 25
\pinlabel $\mu_4$ at 125 25
\pinlabel $\bar\mu_4$ at 225 25
\pinlabel $\eta_{s+2}$ at 308 47

\pinlabel $\eta_{s+1+3s_1}$ at 349 47

\endlabellist
\centering
\includegraphics[scale=.96]{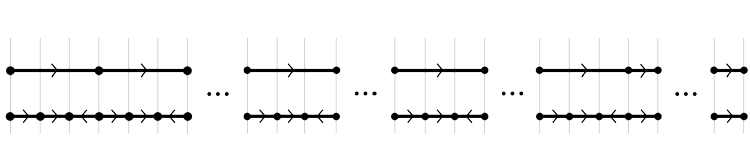}
\caption{}
\label{fig:genEqn}
\end{figure}

We now describe the rewriting process which may be applied to each $S_i[\zeta]_{\Omega}$ with a solution. Notice that in each $S_i[\zeta]_{\tilde\Omega}$, every boundary is an end boundary of some base.

\begin{definition}Rewriting process (Entire transformation): For a quadratic generalized equation $S_{\Omega}$ with a solution $\psi$, the process is described by transformations of $S_{\tilde\Omega}$ and construction of factoring homomorphisms to resulting coordinate groups 

Starting at $1\in BD(\tilde\Omega)$, if $1=\rho_0$, terminate the process. Otherwise, there are variable bases $\mu_1$ and $\mu_2$ with $\alpha(\mu_1)=\alpha(\mu_2)=1$ and $\beta(\mu_1)\geq\beta(\mu_2)=2$. There are three possible cases for $\mu_1$ and $\mu_2$.
\begin{enumerate}[(i)]
\item If $\beta(\mu_1)=2$ and $\mu_2=\bar\mu_1$, remove the pair $\mu_1$ and $\bar\mu_1$ (they are ``matched bases'' and $\epsilon(\mu_1)=\epsilon(\bar\mu_1)$). Then move all bases to the left by one, and decrease $\rho_0,\rho_1$ by one, to obtain a generalized equation $S_{\Omega'}$. Define a homomorphism $\pi':F_{R(S_{\Omega})}\rightarrow F_{R(S_{\Omega'})}*\langle t\rangle$ by $\pi'(x_i)=x'_{i-1}$ for $1<i$, and $\pi'(x_1)=t$, where $t$ is a new free variable. 

\item If $\beta(\mu_1)=2$ and $\mu_2\neq\bar\mu_1$, replace $\bar\mu_1$ by $\mu_2$, reversing $\epsilon(\mu_2)$ if $\epsilon(\mu_1)\neq\epsilon(\bar\mu_1)$, before removing the pair $\mu_1$ and $\bar\mu_1$. Then move all bases to the left by one, and decrease $\rho_0$ and $\rho_1$ by one to obtain a generalized equation $S_{\Omega'}$. Define a homomorphism $\pi':F_{R(S_{\Omega})}\rightarrow F_{R(S_{\Omega'})}$ by $\pi'(x_i)=x'_{i-1}$ for $1<i$, and $\pi'(x_1)=w'(\bar\mu_1)$, where if $w(\bar\mu_1)=(x_k\cdots x_{k+r})^{\epsilon(\bar\mu_1)}$, then $w'(\bar\mu_1)=(x'_{k-1}\cdots x'_{k+r-1})^{\epsilon(\bar\mu_1)}$.

\item Otherwise $2<\beta(\mu_1)$. Let $a=\alpha(\bar\mu_1)$ and $b=\beta(\bar\mu_1)$. \emph{Cut} $\mu_1$ at $2$ into bases $\mu'_1$ and $\mu''_1$, considering each possibility for where to cut $\bar\mu_1$ at $j$ into $\bar\mu'_1$ and $\bar\mu''_1$. See Figure \ref{fig:enTrans}, ($\epsilon$ of each base is not pictured, though cutting and transferring bases must agree with orientation). Consider cuts between existing boundaries $k$ and $k+1$ by letting $j=k+1$ and increasing by one all $\alpha(\lambda),\beta(\lambda)\geq k+1$, as well as $\rho_0$ and $\rho_1$. Then \emph{transfer} $\mu_2$ onto $\bar\mu_1$, i.e. let $\alpha(\mu_2)=a$ and $\beta(\mu_2)=j$. Delete the lone base $\mu'_1$, as well as $\bar\mu'_1$ and rename $\mu''_1$ and $\bar\mu''_1$ as $\mu_1$ and $\bar\mu_1$. Now move all bases to the left by one, and decrease $\rho_0,\rho_1$ by one

For each $S_{\tilde\Omega'}$ constructed in this case, let $\pi':F_{R(S_{\Omega})}\rightarrow F_{R(S_{\Omega'})}$ be defined by $\pi'(x_1)=w'(\bar\mu'_1)$ and $\pi'(x_i)=x'_{i-1}$ for $i>1$ if $j$ is an existing boundary. If $j$ is inserted between $k$ and $k+1$, let $\pi'(x_i)=x'_{i-1}$ for $1<i\leq k$, and $\pi'(x_i)=x'_{i}$ for $k+1\leq i\leq \rho_1$.

\end{enumerate}

The resulting generalized equations are all quadratic, so apply the same process to each.

\end{definition}

\begin{figure}[ht!]
\labellist
\scriptsize\hair 2pt

\pinlabel $1$ at 135 88

\pinlabel $a$ at 7 41
\pinlabel $j$ at 14 41

\pinlabel $a$ at 71 41
\pinlabel $j$ at 85 41

\pinlabel $a$ at 135 41
\pinlabel $j$ at 156 41
\pinlabel $a$ at 200 41
\pinlabel $j$ at 227 41
\pinlabel $a$ at 264 41
\pinlabel $j$ at 297 41
\pinlabel $\mu'_1$ at 143 80
\pinlabel $\mu''_1$ at 163 80

\pinlabel $\bar\mu'_1$ at 143 80
\pinlabel $\bar\mu''_1$ at 163 80

\pinlabel $\mu_2$ at 143 67

\endlabellist
\centering
\includegraphics[scale=1.12]{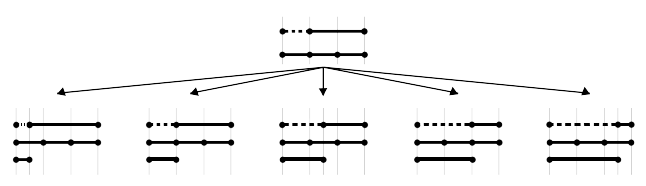}
\caption{}
\label{fig:enTrans}
\end{figure}

For each $S_i[\zeta]_{\Omega}$, the iterated process can be represented as a rooted finite valence tree $\mathcal{T}_i$ with generalized equations at vertices and edges labeled by surjective homomorphisms of coordinate groups. Any solution of $S_i[\zeta]_{\Omega}$ factors through some branch of $\mathcal{T}_i$. If the process terminates for some branch $b$, call the terminal generalized equation $E_b(S_i[\zeta]_{\Omega})$. If a solution of $S_i[\zeta]_{\Omega}$ factors through a branch $b$, the solutions to each consecutive generalized equation have strictly decreasing length. Furthermore, if the rewriting process terminates with $E_b(S_i[\zeta]_{\tilde\Omega})$, notice that $[1,\rho_1+1]$ is partitioned by constant bases, and each constant base is partitioned by variable bases, with each variable base $\mu$ covering exactly one item (i.e. $\beta(\mu)=\alpha(\mu)+1$ and $\mu^{\psi}=x^{\psi}$ for a variable base $\mu$ covering $x$ and a solution $\psi$).

\begin{lemma} \cite{Imp}\label{entire:trans} If the length of $\psi$ is minimal with respect to $Aut(F_{R(S_i[\zeta]_{\Omega})})$, the rewriting process terminates for any branch $b$ through which $\psi$ factors.\end{lemma}
\begin{proof}Suppose $\psi$ factors through an branch of $\mathcal{T}_i$ where the rewriting process continues infinitely. Since there are finitely many boundaries introduced by transferring and cutting bases, the number of bases may only decrease and the number of boundaries is bounded from above. So there is a finite number of possible distinct generalized equations and if $\psi$ factors through an infinite branch, the same generalized equation must be repeated. However, each step of the rewriting process shortens length of solutions, contradicting our assumption that $\psi$ is minimal with respect to $Aut(F_{R(S_i[\zeta]_{\Omega})})$.
\end{proof}

\begin{remark}Notice that the entire transformation does not add more bases, and may only remove a pair of occurences of some variable (and its inverse) by deleting a pair of dual bases, so the resulting system of equations is quadratic.
\end{remark}
So for each $S_i[\zeta]=1$ with a solution in $F(A)$, there is a generalized equation $S_i[\zeta]_{\Omega}$ which has a solution $\psi$, and $\psi$ factors along some terminating branch $b$ of $\mathcal{T}_i$. So $\bar E(S_i[\zeta])=E_b(S_i[\zeta]_{\Omega})$ satisfies the conclusion of Proposition \ref{EnTrans} after renaming variables using the basic equations, since each variable base covers exactly one item. Also let $\bar E(S_i[\zeta])_{\tilde\Omega}=E_b(S_i[\zeta]_{\tilde\Omega})$.
\end{proof}

\subsection{Disk diagrams}

In \cite{Olshanskii-1989} (see also Theorem 2.1 in \cite{KLMT}), it's shown that an orientable quadratic equation $S=1$ in standard form:  \begin{equation}\label{**}\prod_{i=1}^g[x_i,y_i]\prod_{j=1}^{m-1}z_j^{-1}C_jz_jC_m=1\end{equation}
has a solution in $F(A)$ if and only if there is a collection of disks with oriented boundaries labeled by freely reduced words $C_1,\ldots,C_m$, which tile a union of surfaces $\Sigma_1,\ldots,\Sigma_l$, where gluings between boundaries of disks must respect labelings. Furthermore, for the \emph{reduced Euler characteristic} of $S$ defined by $\bar\chi(S)=2-2g$, the Euler characteristics of the surfaces satisfy the inequality $\sum_{i=0}^l\chi(\Sigma_i)\geq 2-2g+2l$. Since any orientable quadratic equation $S=1$ may be sent to a standard quadratic equation $\bar S=1$ by an automorphism of $F_{R(S)}$, let $\bar\chi(S)=\bar\chi(\bar S)$. For a solution $\psi$ of a system $S=1$, denote such a corresponding \emph{disk diagram} by $D(S,\psi)$.

\begin{lemma}\label{reps:diagram}
For each $1\leq i \leq m_1$ and any $\zeta$, if $\psi$ is a solution of $\bar E(S_i[\zeta])$, there is a disk diagram $D(\bar E(S_i[\zeta]),\psi)$ tiling a union of spheres.
\end{lemma}
\begin{proof}
Each coefficient in $\bar E(S_i[\zeta])$ is graphically equal to a word $w^{(j)}(\bar X_i^{\psi})$. So if the boundaries of $s+3s_1+1$ disks are labeled by the coefficients of $\bar E(S_i[\zeta])$, the boundary of each disk may be partitioned by solutions of variables. Since each variable appears exactly twice, these disks may be glued respecting labelings, and so tile some union of surfaces $\Sigma=\Sigma_1\cup\cdots\cup\Sigma_l$.

After transforming $S[\zeta]=1$ to triangular equations $z_1^{(k)}z_2^{(k)}z_3^{(k)}=1$ over $\Gamma$, a solution to $S[\zeta]=1$ in $\Gamma$ implies a solution over $F(A)$ to equations $z_1^{(k)}z_2^{(k)}z_3^{(k)}=c_1^{(k)}c_2^{(k)}c_3^{(k)}$  formed by canonical representatives. So the coordinate group of the equation $P=1$ defined by: $$\prod_{j=1}^s z_j^{-1}\theta_m([a_{\zeta},b^{dDr_j}])z_j(\theta_m([a_{\zeta}^N,b^{dDB}]))^{-1}= c_1^{(1)}\cdots c_3^{(s_1)}$$ is a  subgroup of $F_{R(S_i[\zeta]_{\Omega})}$. Its coordinate group over $F$ is $F_{R(P)}$ and $\bar\chi(P)=2$.  The group $F_{R(S_i[\zeta]_{\Omega})}$ is a free product of the coordinate group of a standard quadratic equation, with a free group. Moreover, since $F_{R(P)}$ is freely indecomposable, it must be conjugate into the coordinate group of this standard quadratic equation. Therefore $\bar\chi(S_i[\zeta]_{\Omega})=2$. The homomorphisms constructed in the rewriting process are all either isomorphisms or map $F_{R(S_i[\zeta]_{\Omega})}$ to a free product of the coordinate group of the standard quadratic equation, with a free group of smaller rank. So these homomorphisms preserve reduced Euler characteristic and $\bar\chi(\bar E(S_i[\zeta]))=2$, implying $\Sigma_1,\ldots,\Sigma_l$ must all be spheres.

\end{proof}
 
For a finitely generated group $G=\langle A\rangle$, and a (not necessarily reduced) word $w=w(A^{\pm1})$ labeling a path $p$ in $Cay(G,A)$, let $\|w\|=\|p\|$ denote the word length of $w$, or equivalently the path length of $p$. Let $w_1\equiv w_2$ denote graphical equality of words in $G$. Denote the geodesic word length of $w$, or equivalently the path length of a geodesic with the same endpoints as $p$, by $|w|$. Abusing notation slightly, if $p$ is a geodesic, that fact may be emphasized by the notation $|p|$ for path length. The notation $\|w\|_G$ and $|w|_G$ is used when it's necessary to distinguish the group. Order the vertices of paths in $Cay(G,A)$ by letting $q_1<q_2$, for $q_1$ and $q_2$ on $p$, if and only if $q_1$ is between $q_2 $ and the initial vertex of $p$, or if $q_1$ is the initial vertex. Denote the subpath of $p$ from $q_1$ to $q_2$ by $p_{(q_1,q_2)}$.

\begin{prop}\label{coms:diagram}
There is a constant $K_0$, which only depends on $\Gamma$ and the choice of $b,c,$ and $D$, such that for $d\geq \kappa>K_0$ and any $(t_1,\ldots,t_n)$, if $S[\zeta]=1$ has a solution in $\Gamma$ for the corresponding $\zeta$, there is a generalized equation $\hat E(S_i[\zeta])$ with a solution $\hat\psi$. The coefficients of $\hat E(S_i[\zeta])$ are: $$[a_{\zeta},b^{dDr_1}],\ldots,[a_{\zeta},b^{dDr_s}], [a_{\zeta}^N,b^{dDB}]$$along with some $R_1,\ldots,R_{K_2}\in F(A)$, with $|R_1|,\ldots,|R_{K_2}|\leq\mathcal{L}$ for some constants $K_2,\mathcal{L}=O(s)$. Also, $\hat E(S_i[\zeta])_{\tilde\Omega}$ has $K_1$ variable bases for some constant $K_1=O(s)$, and gluing disks with boundaries labeled by the coefficients of $\hat E(S_i[\zeta])$ along solutions of dual variable bases yields a disk diagram $\hat D=D(\hat E(S_i[\zeta]),\hat\psi)$ tiling a union of spheres.
\end{prop}

\begin{proof}The solution of $S[\zeta]=1$ in $\Gamma$ factors through a solution of some $S_i[\zeta]=1$, $i\leq m_1$, and so by Proposition \ref{EnTrans}, there is a generalized equation $\bar E(S_i[\zeta])$ with a solution $\psi$. By Lemma \ref{reps:diagram}, there is a disk diagram $\bar D= D(\bar E(S_i[\zeta]),\psi)$ tiling a union of spheres $\Sigma=\Sigma_1\cup\cdots\cup\Sigma_l$. 

Recall that every variable base of $\bar E(S_i[\zeta])$ covers just a single item. For $1\leq j\leq s+3s_1+1$, let $\bar w^{(j)}=w^{(j)}(\bar X_i^{\psi})$ and let $p_j$ be paths in $Cay(\Gamma)$ labeled by $\bar w^{(j)}$ where double occurrences of a variable correspond to a shared subpath. In other words, suppose $w^{(j)}(\bar X_i)=x_{j_1}\cdots x_{j_{n(j)}}$ and $w^{(j')}(\bar X_i)=x_{j'_1}\cdots x_{j_{n(j')}}$ for integers $n(j),n(j')$ and $x_{j_1},\ldots,x_{j_{n(j)}},x_{j'_1},\ldots,x_{j_{n(j')}}\in\bar X_i$. If $x_{j_k}=(x_{j'_{k'}})^{\pm 1}$ for some $1\leq k\leq n(j),1\leq k'\leq n(j')$, then $p_j$ and $p_{j'}$ share the subpath labeled by $x_{j_k}^{\psi}$ (which may have opposite orientations on the paths if $x_{j_k}=(x_{j'_{k'}})^{-1}$). On each $p_j$, let $q_{j_k}$ be the terminal vertex of $x_{j_k}^{\psi}$ for $1\leq k\leq n(j)$, and $q_{j_0}$ be the initial vertex of $p_j$.

By \cite{Olsh} and \cite{RS95}, there are constants
$\pi$ and $\epsilon$ such that any path in $Cay(\Gamma)$ labeled by powers of $b$ and $c$, or by $\theta_m([a_{\zeta},b^{dDr_j}])$, $\theta_m([a_{\zeta}^N,b^{dDB}])$ for $m\leq L$ and $1\leq j \leq s$, are $(\pi,\epsilon)$-quasigeodesic. There is a constant $M=M(\delta,\pi,\epsilon)$, where $\delta$ is the constant of hyperbolicity for $\Gamma$, such that any two $\pi,\epsilon$-quasigeodesics with the same endpoints are in the $M$-neighborhood of each other with respect to the Hausdorff metric (\cite{[BH09]}).

Let $\mathcal{C}_j=[a_{\zeta},b^{dDr_j}]$ for $1\leq j\leq s$, and $\mathcal{C}_{s+1}=[a_{\zeta}^N,b^{dDB}]$. There are paths $\hat p_j$, labeled by $\mathcal{C}_j$, with the same endpoints as $p_j$, for $1\leq j\leq s+1$. So for $1\leq j\leq s+1$ and $0\leq k\leq n(j)$, there are minimal geodesics $s_{j_k}$ from $q_{j_k}$ on $p_j$, to vertices $\hat q_{j_k}$ on $\hat p_j$, with $\|s_{j_k}\|\leq M$. Note that $\hat q_{j_0}=q_{j_0}$ and $\hat q_{j_{n(j)}}=q_{j_{n(j)}}$.

\begin{lemma}\label{no:twist}
Let $p$ and $p'$ be $(\pi,\epsilon)$-quasigeodesics in $Cay(\Gamma)$ with the same initial and terminal vertices. Suppose $q_1<q_2$ are vertices on $p$ and there are paths $\gamma_1$ from $q_1$ to $q'_1$, and $\gamma_2$ from $q_2$ to $q'_2$, where $q'_2<q'_1$ are vertices on $p'$ and $\|\gamma_1\|,\|\gamma_2\|\leq M$. Then $\|p_{(q_1,q_2)}\|\leq\mathcal{N}=2M(\pi+\pi^2)+\epsilon(\pi+1)+\pi$.
\end{lemma}

\begin{proof}
Let $q_0<q_1$ be the maximal (with respect to ordering of vertices on $p$) vertex on $p$ with a path of length at most $M$ from $q_0$ to a vertex $q'_0$ on $p'$ such that $q'_0<q'_2$. Now $\|p_{(q_0,q_1)}\|+\|p_{(q_1,q_2)}\|\leq\pi(\|p'_{(q'_0,q'_2)}\|+2M)+\epsilon$. But $\|p'_{(q'_0,q'_2)}\|\leq\pi(2M+1)+\epsilon$, since for any $q_0<q_3$ on $p$, there is a vertex $q'_2\leq q'_3$ on $p'$ with a path of length at most $M$ from $q_3$ to $q'_3$. The inequality given in the lemma follow from these inequalities.
\end{proof}
In general, $k<k'$ need not imply that $\hat q_{j_k}<\hat q_{j_{k'}}$, but Lemma \ref{no:twist} shows that there are paths of bounded length which partition $p_j$ and $\hat p_j$ in the same order (though since $\hat q_{j_k}$ may equal $\hat q_{j_{k'}}$ for $k\neq k'$, some of the partitioned subpaths of $\hat p_j$ may be empty).
\begin{cor}\label{same:partition}
For each $1\leq j\leq s+1$, the vertices $\hat q_{j_0},\ldots,\hat q_{j_{n(j)}}$ may be renamed so that if $k<k'$, then $\hat q_{j_k}\leq\hat q_{j_{k'}}$, and after renaming there are geodesics $\hat s_{j_k}$ from $q_{j_k}$ to $\hat q_{j_k}$, with $\|\hat s_{j_k}\|\leq R=\mathcal{N}+M$.
\end{cor}

The following fact about free groups is needed.

\begin{lemma}\label{prod:free}Let $w_1=w_2$ in a rank $k$ free group $F(x_1,\ldots,x_k)$, where $w_1=x_{\alpha}^{r_1}x_{\beta}^{\overline{r}_1}\cdots x_{\alpha}^{r_n}x_{\beta}^{\overline{r}_n}$ , $\alpha\neq\beta$, and $w_2=u_1^{s_1}u_2^{\overline{s}_1}\cdots u_1^{s_m}u_2^{\overline{s}_m}$ for freely reduced words $u_1(x_1,\ldots,x_k)$ and $u_2(x_1,\ldots,x_k)$ with $m,n\geq 1$. Only $\bar r_n$ and $\bar s_m$ may be zero in the case where $m,n>1$, and $r_i+\bar r_i>\|u_1\|,\|u_2\|$ for $1\leq i\leq n$ and $\bar r_i+r_{i+1}>\|u_1\|,\|u_2\|$ for $1\leq i\leq n-1$ (for non-zero $r_i,\bar r_i$). Let $R$ be the Hausdorff distance between a path in $Cay(F)$ labeled by $w_1$ and a path with the same endpoints labeled by $w_2$. If $\|u_1\|(s_i-3)\geq2R$ for some $i$, then $u_1=v_1^{-1}x_j^tv_1$ and if $\|u_2\|(\bar s_{i'}-3)\geq2R$ for some $i'$, then $u_2=v_2^{-1}x_{j'}^tv_2$, for $j,j'=\alpha\text{ or }\beta$, $v_1,v_2\in F(x_1,\ldots,x_k)$, and some non-zero integer $t$.
\end{lemma}
\begin{proof}
Since $F$ is free, $u_1,u_2$ must each contain either $x_{\alpha},x_{\beta}$. Since there may be cancellation of at most $R$ between sequential powers of $u_1$ and $u_2$, $w_1$ contains subwords $u_1^2$ or $u_2^2$, corresponding to the $i$ or $i'$ for which $\|u_1\|(s_i-3)>2R$ or $\|u_2\|(\bar s_{i'}-3)>2R$. So if $u_1$ ($u_2$) is only trivially conjugate and contains both $x_{\alpha}$ and $x_{\beta}$, (it may not contain any other generator since $w_1\in\langle x_{\alpha},x_{\beta}\rangle$), then there is some  $x_{\alpha}^{r_j}x_{\beta}^{\bar r_j}$ or $x_{\beta}^{\bar r_j}x_{\alpha}^{r_{j+1}}$ which is a subword of $u_1$ ($u_2$), but $u_1$ and $u_2$ are too short.
\end{proof}

Now for each $x\in\bar X_i$, if $\|x^{\psi}\|>L$, then $x^{\pm 1}$ appears as $x_{j_k}$ and $x_{j'_{k'}}$ in $w^{(j)}(\bar X_i)$ and $w^{(j')}(\bar X_i)$ for $1\leq j,j'\leq s+1$. Let $p_x$ be the subpath of $\hat p_j$ from $\hat q_{j_{k-1}}$ to $\hat q_{j_k}$. If $x_{j_k}=x_{j'_{k'}}$, then let $p'_x$ be the subpath of $\hat p_{j'}$ from $\hat q_{j'_{k'-1}}$ to $\hat q_{j_{k'}}$, whereas if $x_{j_k}=(x_{j'_{k'}})^{-1}$, let $p'_x$ be the subpath of $\hat p_{j'}$ oriented in reverse, from $\hat q_{j_{k'}}$ to $\hat q_{j'_{k'-1}}$. There are geodesics $t_1$ from the initial vertex of $p'_x$ to $\hat q_{j_{k-1}}$, and $t_2$ from the terminal vertex of $p'_x$ to $\hat q_{j_k}$, with $\|t_1\|,\|t_2\|\leq2R$. Notice that $t_1^{-1}p'_xt_2$ is a $(\pi,\epsilon+4R(\pi+1))$-quasigeodesic with the same endpoints as $p_x$ (which is $(\pi,\epsilon)$-quasigeodesic). So there is a constant $M_1=M_1(\delta,\pi,\epsilon+4R(\pi+1))$ such that $p_x$ and $t_1^{-1}p'_xt_2$ are in the $M_1$-neighborhood of each other. 

Since $\hat p_j,\hat p_{j'}$ are labeled by $\mathcal{C}_j$ and $\mathcal{C}_j'$ for some $1\leq j,j'\leq s+1$, $\hat p_j,\hat p_{j'}$ may each be partitioned into subpaths where each subpath is labeled by either $b^D$ or $c^D$. Call the vertices separating these subpaths \emph{$D$-vertices}. Let $D_1=D\max\{|b|,|c|\}$. Now for each $D$-vertex $v$ on $p_x$ there is a geodesic $s_v$ to $v$ from a vertex $u$ on $t_1p'_xt_2^{-1}$ with $\|s_v\|\leq M_1$. For each $D$-vertex $v$ on $p_x$ further than $\pi(2R+M_1)+\epsilon$ (measured along $p_x$) from both the initial and terminal vertices of $p_x$, $u$ is actually on $p'_x$ and $u$ is separated from a $D$-vertex $v'$ of $p'_x$ by a subpath of length less than $D_1$. So for each $D$-vertex $v$ on $p_x$, other than those within $\pi(2R+M_1)+\epsilon$ of end vertices of $p_x$, there is a geodesic $t_v$ to $v$ from a $D$-vertex $v'$ on $p'_x$ with $|t_v|<C=M_1+D_1$. Call each geodesic to a $D$-vertex of $p_x$ from a $D$-vertex of $p'_x$ a \emph{$D$-geodesic}. Let $B_C=|B_{\Gamma}(C)|$ be the size of the ball of radius $C$ in $\Gamma$.

\begin{lemma}\label{bnd:error}
Suppose $S[\zeta]=1$ has a solution in $\Gamma$ for $\zeta=(d,\kappa,t_1,\ldots,t_n)$ where $$\kappa>K_0=\max\{\pi(D_1+2C)+\epsilon,\frac{\pi(2D_1(2M+3)+2C)+\epsilon}{2D_1},\frac{\mathcal{N}}{2D_1}\}$$ $d\geq\kappa$ and $n,t_1,\ldots,t_n$ are any positive integers. Let $\psi$ be a solution of $\bar E(S_i[\zeta])$ for some $1\leq i\leq m_1$. Suppose $\|x^{\psi}\|>\max\{L_1,L\}$ for some $x\in\bar X_i$, where $$L_1=\frac{1}{\pi}(2(\pi(2R+M_1)+\epsilon)+D_1(B_C(2\kappa+1)+2)-\epsilon)-2R.$$ Then the corresponding subpaths $p_x$ and $p'_x$ are graphically equal up to bounded error. In other words, $p_x$ is labeled by $$c_{j_0}\bar w_{x_1}c_{j_1}\cdots c_{j_{\mathcal{R}(x)-1}}\bar w_{x_{\mathcal{R}(x)}}c_{j_{\mathcal{R}(x)}}$$ and $p'_x$ is labeled by $$c'_{j_0}\bar w_{x_1}c'_{j_1}\cdots c'_{j_{\mathcal{R}(x)-1}}\bar w_{x_{\mathcal{R}(x)}}c'_{j_{\mathcal{R}(x)}}$$ for some $\mathcal{R}(x)\leq B_C$, where $\sum_{i=0}^{\mathcal{R}(x)}\|c_{j_i}\|<\mathcal{E}_1$ and $\sum_{i=0}^{\mathcal{R}(x)}\|c'_{j_i}\|<\mathcal{E}$ for\\ $\mathcal{E}_1=B_C(D_1(2\kappa+3)+2C)+2(\pi(2R+M_1)+\epsilon)+D_1$ and\\ $\mathcal{E}=\pi(B_C(4C+2\kappa D_1)+2\pi(2R+M_1)+D_1+4R)+(2B_C+2\pi+1)\epsilon+2B_C(C+D_1)$.
\end{lemma}
\begin{proof}Suppose $\|x^{\psi}\|>\max\{L_1,L\}$ and $p_x,p'_x$ are the subpaths (with orientation possibly reversed) of $\hat p_j$ and $\hat p_{j'}$ (which are labeled by $\mathcal{C}_j$ and $\mathcal{C}_{j'}$, respectively) as defined previously. There are at least $2\kappa(B_C+1)$ $D$-geodesics to distinct $D$-vertices of $p_x$ from $D$-vertices of $p'_x$ (which may not be distinct), each of length less than or equal to $C$. So there must be multiple $D$-geodesics with the same label in $\Gamma$, to $D$-vertices on $p_x$ which are separated by a subpath of length at least $2\kappa D_1$.

Suppose $h\in B_{\Gamma}(C),h\neq 1$, labels a pair of $D$-geodesics $t_{v_1}$ and $t_{v_2}$ to $D$-vertices $v_1$ and $v_2$ which are at least $2\kappa D_1$ apart on $p_x$. Assume $v_1$ and $v_2$ are the furthest apart pair of $D$-vertices whose $D$-geodesics are labeled by $h$, and $v'_1$ and $v'_2$ are their corresponding $D$-vertices on $p'_x$. Let $w_1(b^D,c^D)$ be the subword of $\mathcal{C}_j$ labeling the subpath of $p_x$ from $v_1$ to $v_2$, and let $w_2(b^D,c^D)$ be the subword of $(\mathcal{C}_{j'})^{\pm1}$ labeling the subpath of $p'_x$ from $v'_1$ to $v'_2$. Note that $h^{-1}w_2(b^D,c^D)h=w_1(b^D,c^D)$ in $\Gamma$, since $\kappa$ is large enough to not permit ``twisting'' (see Lemma \ref{no:twist}), and that $\|w_1\|\geq2\kappa D_1$, $\|w_2\|\geq2D_1(2M+3)$.

Since the normal closure $\langle\langle b^D,c^D\rangle\rangle$ in $\Gamma$ is free, the subgroup \\$H=\langle b^D,c^D, h^{-1}b^Dh,h^{-1}c^Dh\rangle\leq\langle\langle b^D,c^D \rangle\rangle$ is free. If $\{b^D,c^D, h^{-1}b^Dh,h^{-1}c^Dh\}$ is a basis of $H$, then $h^{-1}w_2h=w_1$ in $F(A)$.
Otherwise, without loss of generality, either $\{b^D,c^D\}$ is a basis of $H$ or $\{b^D,c^D, h^{-1}b^Dh\}$ is a basis of $H$. We will show that in both cases $h^{-1}w_2h=w_1$ in $F(A)$ (the proof is the same for a basis $\{b^D,c^D, h^{-1}c^Dh\}$).

By \cite{Olsh}, if $w_1$ or $w_2$ is in either $\langle b^D\rangle$ or $\langle c^D\rangle$, then $h^{-1}w_2h=w_1$ in $F(A)$. So assume that each contain both $b^D$ and $c^D$. If $\{b^D,c^D\}$ is a basis of $H$, $h^{-1}b^Dh=u_1(b^D,c^D)$ and $h^{-1}c^Dh=u_2(b^D, c^D)$ for some words $u_1,u_2$, and $h^{-1}w_2(b^D,c^D)h=w_2(u_1,u_2)$. Furthermore, since $u_1$ and $u_2$ are also $(\pi,\epsilon)$-quasigeodesics, $\|u_i\|_H\leq\|u_i\|_{\Gamma}\leq\pi(D_1+2C)+\epsilon< \kappa$ for $i=1,2$. So if $d\geq \kappa$, every power of $b^D$ and $c^D$ in $\mathcal{C}_j$ and $\mathcal{C}_{j'}$ is at least $\kappa> 2M+3$. Furthermore, since $\|w_1\|\geq2\kappa D_1$ and $\|w_2\|\geq2D_1(2M+3)$, $w_1$ contains a power of $b^D$ or $c^D$ which is at least $\kappa$ and $w_2$ contains a power of $b^D$ or $c^D$ which is at least $2M+3$. So $w_1,w_2,u_1,u_2\in H$ satisfy the conditions for Lemma \ref{prod:free} and either $h^{-1}c^Dh$ is equal to the conjugate of some power of $c^D$, or $h^{-1}b^Dh$ is equal to the conjugate of some power of $b^D$, by some word $v(b^D,c^D)$ (since no power of $b$ is conjugate to a power of $c$). In either case $v(b^D,c^D)h^{-1}$ belongs to either $\langle b^D\rangle$ or $\langle c^D\rangle$, since cyclic subgroups of $\Gamma$ are malnormal (\cite{[BH09]}). Therefore, $h\in\langle b^D,c^D\rangle$ and $h^{-1}w_2(b^D,c^D)h=w_1(b^D,c^D)$ in $F(A)$.

 If $\{b^D,c^D, h^{-1}b^Dh\}$ forms a basis of $H$, then $h^{-1}c^Dh$ must be a word in $b^D, c^D,h^{-1}b^Dh$. Assume $\pi$ and $\epsilon$ have been taken so that every path in $Cay(\Gamma)$ labeled by powers of $b,c$ or $h^{-1}b^Dh$ is a $(\pi,\epsilon)$-quasigeodesic for any $h\in B_{\Gamma}(C)$. So again by Lemma \ref{prod:free}, $h^{-1}c^Dh$ is a conjugate of a power of $c^D$, or $h^{-1}b^Dh$ is a conjugate of a power of $b^D$, by some $v(b^{D},c^D, h^{-1}b^Dh)$. Again, either case implies that $vh^{-1}$ is in $\langle b^D\rangle$ or $\langle c^D\rangle$, and so $h\in\langle b^D, h^{-1}b^Dh, c^D\rangle$. But then the equality $h^{-1}w_2(b^D,c^D)h=w_1(b^D,c^D)$ in $H$ implies that $h$ is expressed in terms of  $b^D$ and $c^D$ only, contradicting that $\{b^D,c^D, h^{-1}b^Dh\}$ is a basis of $H$.

Now $h^{-1}w_2h=w_1$ in $F(A)$ implies that there are $\bar w_h,c_1,c_2,c'_1,c'_2\in F(A)$ such that $w_1\equiv c_1\bar w_hc_2$, $w_2\equiv c_1'\bar w_hc_2'$, $\bar w_h$ is the label of a subpath of both $p_x$ and $p'_x$ which begins and ends at $D$-vertices, and $\|c_1\|,\|c_2\|,\|c_1'\|,\|c_2'\|\leq C+D_1$.

Suppose $\bar w_h$ and $\bar w_{h'}$ are labels of shared subpaths of $p_x$ and $p'_x$, constructed in this manner from different repeated labels $h$ and $h'$, of $D$-geodesics. The subpaths labeled by $\bar w_h$ and $\bar w_{h'}$, call them $p_h$ and $p_{h'}$ respectively, do not overlap. Since $D$-geodesics to $D$-vertices on $p_h$ are trivial and $h'$ may emanate from an end vertex of $p_h$ but not from an internal vertex, the subpaths of $p_x$ and $p'_x$ between the furthest apart $D$-geodesics labeled by $h'$, must be disjoint from $p_h$. Since $p_{h'}$ is itself a subpath of those subpaths, it must be disjoint from $p_h$ as well. The bound $\mathcal{E}_1$ given on the sum of length of subpaths of $p_x$ between consecutive $p_h$ and $p_{h'}$ follow from the maximum number of $D$-geodesics without repeated labels at least $2\kappa D_1$ apart, in addition to each of the $\|c_1\|,\|c_2\|$, as above, and the maximum length of the subpaths at the beginning and end of $p_x$ which may not have $D$-geodesics. The bound $\mathcal{E}$ follows from $p'_x$ being a $(\pi,\epsilon)$-quasigeodesic.
\end{proof}

We may now construct the combinatorial generalized equation $\hat E(S_i[\zeta])_{\tilde\Omega}$ from $\bar  E(S_i[\zeta])_{\tilde\Omega}$ using these lemmas. By Corollary \ref{same:partition}, each constant base $\eta$ in $\bar  E(S_i[\zeta])_{\tilde\Omega}$ labeled by $\theta_m([a_{\zeta},b^{dDr_j}]);1\leq j\leq s$ or $\theta_m([a_{\zeta}^N,b^{dDB}])$ may be relabeled by the corresponding $\mathcal{C}_j$. The variable bases partioning $\eta$ may be mapped to the labels of subpaths of $\hat p_j$ between each of $\hat q_{j_0},\ldots,\hat q_{j_{n(j)}}$. While this map will generally not be a solution of this generalized equation, adding some more bases will yield a solution. 

Let $\mathcal{L}=\max\{\mathcal{E},\pi(L+2R)+\epsilon,\pi(L_1+2R)+\epsilon\}$. Given a solution $\psi$ of $\bar  E(S_i[\zeta])_{\tilde\Omega}$, for any variable base $\mu$ (which must correspond to a single variable $x$), if $\|\mu^{\psi}\|=\|x^{\psi}\|>\max\{L_1,L\}$, then cut $\mu$ and $\bar\mu$ each into $2\mathcal{R}(x)+1$ new bases with $\mathcal{R}(x)$ as in Lemma \ref{bnd:error}. The bases corresponding to the $c_{j_k}$ and $c'_{j_{k'}}$, for $0\leq k,k'\leq\mathcal{R}(x)$, are constant bases labeled by those coefficients, which are each of length at most $\mathcal{L}$. The remaining are pairs of dual bases corresponding to $\bar w_{x_j}$ for $1\leq j\leq\mathcal{R}(x)$.

Now suppose $\|\mu^{\psi}\|\leq\max\{L_1,L\}$, with $\mu^{\psi}$ and $\bar\mu^{\psi}$ appearing in $\bar w^{(j)}$ and $\bar w^{(j')}$, where $1\leq j,j'\leq s+1$, as $x_{j_k}^{\psi}$ and $x_{j'_{k'}}^{\psi}$ respectively. Then replace $\mu$ with a constant base labeled by the label of the subpath of $\hat p_j$ from $\hat q_{j_{k-1}}$ to $\hat q_{j_k}$, and replace $\bar\mu$ with a constant base labeled by the label of the subpath of $\hat p_{j'}$ from $\hat q_{j'_{k'-1}}$ to $\hat q_{j'_{k'}}$. Note that each of these subpaths is of length at most $\mathcal{L}$.

Suppose $\|\mu^{\psi}\|\leq\max\{L_1,L\}$, with $\mu^{\psi}$ and $\bar\mu^{\psi}$ appearing in $\bar w^{(j)}$ (as $x_{j_k}^{\psi}$) and $\bar w^{(j')}$ respectively, where $1\leq j\leq s+1$ and $j'>s+1$. Then $\bar\mu$ is covered by a constant base $\eta$ labeled by $\bar c_{j-s-1}$. Cut $\eta$ at the end boundaries of $\bar\mu$ and for the new constant base $\eta_1$ covered by $\bar\mu$, let $\sigma(\eta_1)$ be equal to the label of the subpath of $\hat p_j$ from $\hat q_{j_{k-1}}$ to $\hat q_{j_k}$ (which is of length at most $\mathcal{L}$).

Finally, if a pair of dual bases are covered by constant bases labeled by $\bar w^{(j)}$ and $\bar w^{(j')}$ for $j,j'>s+1$, no changes are needed. By Lemma \ref{bnd:error}, this new generalized equation $\hat E(S_i[\zeta])$ has a solution $\hat\psi$. Furthermore, disks labeled by the coefficients of $\hat E(S_i[\zeta])$ may be glued together on the same union of spheres $\Sigma$ as $\bar D$. This follows from the construction of $\hat E(S_i[\zeta])$, which allows the disks of $\bar D$ to be relabeled with coefficients of $\hat E(S_i[\zeta])$ so that some gluing between disks may be removed but new disks are introduced which are glued in as ``$0$-cells'' (in the terminology of \cite{Olshanskii-1989}), maintaining a tiling on $\Sigma$. Denote the disk diagram obtained by this process as $\hat D$. 

Recall that $\bar E(S_i[\zeta]_{\tilde\Omega})$ has at most $15s_1-s-1$ variable bases and $3s_1+s+1$ constant bases. So there are $K_2\leq(B_C+1)(15s_1-s-1)+3s_1+s+1$ many constant bases in addition to those labeled by $\mathcal{C}_1,\ldots,\mathcal{C}_{s+1}$, and there are $K_1\leq B_C(15s_1-s-1)+2K_2$ many variable bases in $\hat E(S_i{\zeta}_{\tilde\Omega})$. Rename the $K_2$ coefficients of constant bases (and labels of disk boundaries) other than those labeled by $\mathcal{C}_1,\ldots,\mathcal{C}_{s+1}$, by $R_1,\ldots,R_{K_2}$. Proposition \ref{coms:diagram} is proved.

\end{proof}

We now show that for certain $\zeta$, the disk diagram $\hat D$ constructed in Proposition \ref{coms:diagram} is equivalent (in that it has the same labels of disk boundaries and tiles the same union of spheres) to another disk diagram with particular properties.

\begin{lemma}\label{a-bands} There is a number $n=O(s^2)$, and a tuple $(t_1,\ldots,t_n)$ of positive integers, such that for $\kappa>K_0$ as in Lemma \ref{bnd:error} and $d>\max\{\kappa,\mathcal{L}\}$, then if $S[\zeta]$ has a solution in $\Gamma$ for the corresponding $\zeta$, there is a disk diagram equivalent to $\hat D$, in which
words $a_{\zeta}$ are only glued to words $a_{\zeta}$.
\end{lemma}  

\begin{proof}By Proposition \ref{coms:diagram}, there is a solution $\psi$ of $\hat E(S[\zeta])$. There are $O(s)$ boundaries from variable and constant bases in $\hat E(S[\zeta])_{\tilde\Omega}$ for $\zeta$ constructed from any $n$. So there is some $n=\mathcal(s^2)$ such that $a_{\zeta}=b^{\kappa D}c^{dt_{1}D}b^{\kappa D}\ldots b^{\kappa D}c^{dt_{n}D}b^{\kappa D}$ has a subword $w_1=b^{\kappa D}c^{dt_iD}b^{\kappa D}$ which is not cut by the solution of a variable base, in all occurrences of $a_{\zeta}$ in $\mathcal{C}_1,\ldots,\mathcal{C}_{s+1}$.
Now let $t_1>K_2\mathcal{L}$, $t_{j+1}>t_j+K_2\mathcal{L}$ for $1\leq j \leq n-1$. By Proposition \ref{coms:diagram}, having no cuts from solutions of variable bases on the subword forces an exact gluing of that subword on disk boundaries of $\hat D$, up to a possible error disk with boundary labeled $R_j$. But since $d>\mathcal{L}>|R_j|$, the $R_{1},\ldots, R_{K_2}$ are all too short to be glued to the subword as a $0$-cell. If $R_j$ is glued to both occurences of the subword, then $R_j$ would be an unreduced word. This implies that every section of a disk boundary labeled by $a_{\zeta}$ has the portion labeled by $w_1$ glued to a portion labeled by $w_1$ (since it only appears once in $a_{\zeta}$) of another section of a disk boundary labeled by $a_{\zeta}$.

It is possible to find an equivalent diagram such that all occurrences of words $a_{\zeta}$ are glued only to words $a_{\zeta}$.  In particular, for a variable base $\mu$ covering $w_1$, assume that $\mu^{\psi}=w_1$ (the covering is exact) by cutting the bases $\mu,\bar\mu$ if necessary (as in the rewriting process, giving a finite collection of generalized equations, one of which has a solution). So $\bar\mu$ also exactly covers $w_1$.

If $w_1$ is not the terminal subword of $a_{\zeta}$ (i.e. $i\neq n$), let $\mu_1$ be the variable base with $\alpha(\mu_1)=\beta(\mu)$. If $\epsilon(\bar\mu)=\epsilon(\mu)$, let $\mu_2$ be the variable base with $\alpha(\mu_2)=\beta(\mu)$, whereas if $\epsilon(\bar\mu)=-\epsilon(\mu)$, then let $\mu_2$ be the variable base with $\beta(\mu_2)=\alpha(\mu)$. If $\|\mu_1^{\psi}\|=\|\mu_2^{\psi}\|$ then $\mu_1$ and $\mu_2$ cover the same subword $w'_1$ of $a_{\zeta}$, where $w'_1$ directly precedes or follows $w_1$. So there is an exact gluing of a strictly larger subword $w_2$ equal to $w_1w'_1$ or $w'_1w_1$.
 
Without loss of generality, if $\|\mu_1^{\psi}\|>\|\mu_2^{\psi}\|$, then it is possible to cut $\mu_1$ into bases $\nu_1\nu_2$ (and $\bar\mu_1$ into $\bar\nu_1\bar\nu_2$) so that $\|\nu_1^{\psi}\|=\|\mu_2^{\psi}\|$ and again there must be an exact gluing of the strictly larger subword $w_2$ of $a_{\zeta}$ covered by $\mu\nu_1$.

Similarly, if $w_1$ is not the inital subword of $a_{\zeta}$ (i.e. $i\neq1$), let $\mu'_1$ be the variable base with $\beta(\mu'_1)=\alpha(\mu)$ and $\mu'_2$ be the variable base analagously defined in reverse to above. The same process again forces an exact gluing of a strictly larger subword $w_2$ of $a_{\zeta}$ covered by $\mu'_1\mu$ or $\nu'_1\mu$.

Iterating these processes for the resulting subwords $w_2$ (using $\beta(\mu_1)$ or $\beta(\nu_1)$ instead of $\beta(\mu)$ and $\alpha(\mu'_1)$ or $\alpha(\nu'_1)$ instead of $\alpha(\mu)$ accordingly) for every base $\mu$ covering the subword $w_1$, we obtain a diagram equivalent to $\hat D$ with every label $a_{\zeta}$ exactly glued to another label $a_{\zeta}$.\end{proof}
 \begin{prop}\label{removeError}Suppose $d>\max\{\kappa,K_2\mathcal{L}\}$, where $K_2$ is as in Proposition \ref{coms:diagram}, $\kappa>K_0$ satisfies Lemma \ref{bnd:error}, and $(t_1,\ldots,t_n)$ satisfying Lemma \ref{a-bands}. Then if $S[\zeta]$ has a solution in $\Gamma$ for the corresponding $\zeta$, the equation
  \begin{equation}\label{***}\prod_{j=1}^sz_j^{-1}[a_1,b_1^{r_j}]z_j=[a_1^{N},b_1^{B}]\end{equation} has solution in 
  $F(a_1,b_1)$ for $a_1=a_{\zeta}$ and $b_1=b^{dD}$.\end{prop}
  \begin{proof}Suppose $d>\max\{\kappa,K_2\mathcal{L}\}\geq K_2\mathcal{L}>\mathcal{L}$. Then words $a_{\zeta}$ are only glued to words $a_{\zeta}$ (``$a_{\zeta}$-bands'' are formed, as in \cite{KLMT}) in the diagram constructed in Lemma \ref{a-bands}. Since $\sum_{j=1}^{K_2}\|R_j\|\leq K_2\mathcal{L}<d<\|a_{\zeta}\|$, there are no annuli of $a_{\zeta}$-bands with some disks filling in the center that have labels from $R_1,\ldots,R_{K_2}$. So disks with boundaries $R_j$ that glue to $a_{\zeta}$-bands must be glued between the sides of $a_{\zeta}$-bands labeled by powers of $b$. However, since $b$ is not a proper power it can not be shifted relative to itself (i.e. no path labeled by $b$ may end in the middle of another path labeled by $b$), so any $R_j$ disk between two $a_{\zeta}$-bands partially glued together, must be labeled by powers of $b$ and exactly the same subwords of $b$ (starting from where the two $a_{\zeta}$-bands separate). In other words $R_j=b_1b_1^{-1}b^mb_2b_2^{-1}b^{-n}$ for subwords $b_1,b_2$ of $b$ or $b^{-1}$, and integers $m,n$. But the labels $R_j$ must be reduced words, so these disks must only glue to other disks labeled $R_{j'}$. Therefore the $R_j$-disks appear in the diagram tiling spheres that have no $a_{\zeta}$-bands. Those spheres may be removed, giving a disk diagram with spheres tiled by only disks with labels $\mathcal{C}_1,\ldots,\mathcal{C}_{s+1}$, glued as $a_{\zeta}$-bands. This new diagram may be labeled with powers of $a_{\zeta}$ and $b^{dD}$, maintaining gluings which respect those labels. So by \cite{Olshanskii-1989}, equation (\ref{***}) has a solution in $F(a_{\zeta},b^{dD})$. 
  \end{proof}

As in \cite{KLMT}, this tiling of a sphere gives a solution to the given bin packing input by filling a disk labellabeled by $[a_1^N,b_1^B]$ with $a_1$-bands.
\begin{cor}For such $\zeta$, if $S[\zeta]=1$ has a solution in $\Gamma$, then there a solution to the given bin packing input.
\end{cor}
\begin{cor}The Diophantine problem for quadratic equations over a non-cyclic torsion-free hyperbolic group, is NP-hard
\end{cor}
 On the other hand, Theorem \ref{th:1}
implies that this problem is in NP. The proof of Theorem \ref{NPhard} is now complete.
\\
\\
{\bf Acknowledgments}

In conclusion, we thank the referee whose remarks have greatly improved the exposition. The first author acknowledges the support by NSF grant DMS-0700811, the fourth author acknowledges the support by  EPSRC Grant EP/K016687/1.  The authors thank Erwin Schr{\"o}dinger International Institute for Mathematical Physics and the program 
``Geometry of computation in groups'', where they were able to discuss the results of this paper in April 2014.

\bibliography{quadr_eq_hyp_biblio-2}
\bibliographystyle{amsplain}

Olga Kharlampovich,  (okharlampovich@gmail.com) Dept. Math and Stats, Hunter College, CUNY, 695 Park Avenue
New York, NY 10065 USA.

Atefeh Mohajeri, (at.mohajeri@gmail.com) Dept. Math and Stats, McGill University, 805 Sherbrooke St. W., Montreal, Canada, H3A 0B9.

Alexander Taam,  (alex.taam@gmail.com) Dept. Math, Graduate Center, CUNY, 365 Fifth Avenue
New York, NY 10016 USA.

Alina Vdovina, (Alina.Vdovina@newcastle.ac.uk) Dept. Math and Stats, Newcastle University, Newcastle University 
Newcastle upon Tyne, NE1 7RU, UK.
\end{document}